\documentclass[11pt]{article}

\usepackage{amsmath}
\usepackage{amsthm}
\usepackage{amsfonts}
\usepackage{amssymb}
\usepackage{latexsym}
\usepackage{mathrsfs}

\usepackage[noautoscale]{youngtab}

\usepackage{fullpage}

\usepackage{color, colortbl}
\usepackage{array}
\usepackage{multirow}

\usepackage{graphicx}
\usepackage{subcaption}
\usepackage{tikz}

\usepackage{etoolbox}

\usepackage{enumitem}
\setlist[enumerate, 1]{label=(\roman*)}

\numberwithin{equation}{section}

\newtheorem{thm}{Theorem}[section]
\newtheorem{lem}[thm]{Lemma}
\newtheorem{prop}[thm]{Proposition}
\newtheorem{cor}[thm]{Corollary}
\newtheorem{conj}[thm]{Conjecture}
\newtheorem{rem}[thm]{Remark}

\newtheorem{exa}[thm]{Example}
\newtheorem{fact}[thm]{Fact}
\newtheorem{prob}[thm]{Problem}

\def\bN{\mathbb N}

\def\bP{\mathbb P}
\def\cA{\mathcal{A}}
\def\cE{\mathcal{E}}
\def\cM{\mathcal{M}}

\def\fS{\mathfrak{S}}

\usepackage[colorlinks=true,
linkcolor=webgreen,
filecolor=webbrown,
citecolor=webgreen]{hyperref}

\definecolor{webgreen}{rgb}{0,.5,0}
\definecolor{webbrown}{rgb}{.6,0,0}

\usepackage[symbol]{footmisc}

\begin{document}

\title{Stable patterns on permutations of multisets$^\dag$}

\author{Shaoshi Chen$^\ddag$, Hanqian Fang$^{*,\S}$\ \ and Sergey Kitaev$^{\P}$}

\footnotetext[1]{Corresponding author.}
\footnotetext[2]{Shaoshi Chen and Hanqian Fang were partially supported by the Strategic Priority Research Program of the Chinese Academy of Sciences (No. XDB0510201) and the NSFC grant (No. 12271511).}
\footnotetext[3]{KLMM, Academy of Mathematics and Systems Science, Chinese Academy of Sciences, Beijing 100190, P. R. China. Email: schen@amss.ac.cn.}
\footnotetext[4]{KLMM, Academy of Mathematics and Systems Science, Chinese Academy of Sciences, Beijing 100190, P. R. China.  Email: fanghanqian22@mails.ucas.ac.cn.}
\footnotetext[5]{Department of Mathematics and Statistics, University of Strathclyde, 26 Richmond Street, Glasgow G1 1XH, United Kingdom. Email: sergey.kitaev@strath.ac.uk.}

\date{\today}

\maketitle

\noindent\textbf{Abstract.} In this paper, we study patterns on permutations of multisets whose multivariate distribution generating functions are symmetric. We interpret this phenomenon through the lens of group actions and define such a pattern as stable. Although various stability results are already implicit in existing enumerative work, we explicitly summarize them here and provide bijective proofs. These bijections offer new combinatorial insight into the symmetry of the generating functions. We also establish instability results. In particular, we provide a complete characterization of stable classical patterns, showing that the only such patterns are those of length one or two. For consecutive patterns, we reprove the stability of all monotone patterns and also identify a large class of unstable patterns. We conjecture that monotone patterns are the only stable consecutive patterns. As an application, we use stability to derive recurrence relations for the ascent distribution over permutations of restricted multisets, yielding a generalization of Eulerian numbers.\\

\noindent {\bf AMS Classification 2010:} 05A05; 05A15; 05A19 

\noindent {\bf Keywords:} Permutation pattern, multiset, stability, symmetric function, Eulerian number

\tableofcontents


\newpage

\section{Introduction}
The generating function for a combinatorial property over all words from the set of positive integers is the multivariate series
\[\sum_{n\geq 0}\sum_{k_1,k_2,\ldots,k_{n-1}\geq0,k_n>0}c_{k_1,k_2,\ldots,k_n} x_1^{k_1}x_2^{k_2}\cdots x_n^{k_n},\]
where the coefficient $ c_{k_1, k_2,\ldots, k_n} $ counts the words satisfying the property in which each letter $ i $ appears exactly $ k_i $ times. MacMahon~\cite{MacMahon1915} proposed refining generating functions by substituting $ x_1^{k_1} x_2^{k_2}\cdots x_n^{k_n} $ with the symmetric function it generates, since the symmetric function not only reduces redundant parameters in enumeration, but also reveals deeper algebraic structure. For instance, Gessel~\cite{Gessel1990} established several criteria for determining whether a symmetric generating function is D-finite; moreover, the symmetry of the generating functions enabled Yang and Zeilberger~\cite{Yang2020} to develop an $O(n^{s+1})$ algorithm for enumerating words with $n$ letters each repeated $s$ times while avoiding increasing consecutive patterns of any fixed length.
These considerations motivate our search for symmetric generating functions. A further motivation arises from the study of $3$-dimensional permutations in~\cite[Section 3]{Chen2025}, where the authors reduce these permutations to the words in which letters repeat at most twice. Enumerative results for statistics over these permutations can then be derived from corresponding results over words, provided these word enumerations depend only on the word length and the number of repeated letters. In other words, the generating functions of these statistics are symmetric, at least when restricted to the homogeneous components where the degree of each variable is at most two.

MacMahon derived symmetric generating functions for the distribution of several word statistics via symmetric function theory, including the greater index (now commonly known as the ``major index'')~\cite[Item 105]{MacMahon1915}, the descent number~\cite[Item 155]{MacMahon1915}, and the bivariate statistic of descent number and greater index~\cite{MacMahon1916}. These three types of statistics were subsequently shown to be equidistributed with numerous other statistics, and are now known respectively as \emph{Mahonian},
\emph{Eulerian}, and \emph{Eulerian--Mahonian} statistics, see~\cite{Carnevale2017} for a review. 
Symmetric generating functions also arise frequently in the enumeration of patterns. Yang and Zeilberger~\cite{Yang2020} derived an explicit expression for the generating functions for the distribution of increasing consecutive patterns over permutations of multisets in terms of elementary symmetric functions. Atkinson, Linton, and Walker~\cite{Atkinson1995} gave an explicit symmetric generating function for permutations of multisets avoiding $132$, while Albert et al.~\cite{Albert2001} obtained analogous results for $123$-avoiding permutations and extended them to all classical monotone patterns of arbitrary length. Taken together, these two results show that the generating function for the avoidance of any classical pattern of length $3$ is symmetric. The connection between $123$-avoiding and $132$-avoiding permutations of multisets was later established by Zeilberger~\cite{Zeilberger2005} (see~\cite[Section 3.2]{Shar2016} for a survey). Savage and Wilf~\cite{Savage2006} provided another novel proof of the symmetry for $123$-avoiding permutations of multisets. Their key insight was that this symmetry follows from the invariance of the number of such permutations under the action of the symmetric group. Since the symmetric group is generated by adjacent transpositions, it suffices to construct, for each such transposition, a bijection that maps the set of $123$-avoiding permutations onto the set of permutations that both avoid $123$ and are obtained by applying that transposition.

Inspired by Savage and Wilf, we reinterpret such symmetry through the lens of group action. This classical perspective builds a natural bridge between algebra and combinatorics. It introduces the fundamental concepts in a conceptually transparent manner and also provides a powerful framework for enumeration~\cite{Stanley1982,Garsia1984}. In particular, invariants under the permutation of entries play a key role in various contexts, for example in the enumeration of weakly increasing parking functions~\cite{Harris2025}. We define a property to be \emph{stable} if its corresponding generating function is symmetric, and a pattern to be \emph{stable} if the generating function for its distribution is symmetric. From an algebraic viewpoint, stability means independence under permutations of multiplicities in multisets (for example, the property or statistic behaves identically on permutations of the multisets $\{1,1,2,2,2,3\}$ and $\{1,1,1,2,3,3\}$). 

In this paper, we focus on the characterization of stable patterns. While stability has appeared implicitly in several earlier enumerative studies, we make these instances explicit here and supply bijective proofs. These bijections shed new light on the combinatorial origins of the generating function symmetry. To make our theory more complete, we also establish the instability of a wide class of patterns. In summary, for classical patterns, we prove that the only stable ones are those of length at most two (Theorem~\ref{THM:classic-pattern-charact}); for consecutive patterns, we reprove that all monotone patterns are stable via bijections (Theorem~\ref{THM:mono-cons-stable}); furthermore, Theorem~\ref{PROP:unstable-by-extended} constructs a large class of unstable consecutive patterns, thereby making significant progress toward a complete classification.

As an application of pattern stability, this paper derives some recurrence relations for the distribution of stable pattern $\underline{12}$ over permutations subject to certain constraints. A classical result in this field, dating back to~\cite{MacMahon1915,Carlitz1954}, is that usual permutations with $s$ occurrences of $\underline{12}$ are counted by the {\em Eulerian numbers}. For permutations over a general multiset, MacMahon~\cite[Item 155]{MacMahon1915} provided two algebraic expressions whose coefficients encode the ascent distribution. Subsequent research has focused on the generalized Eulerian polynomial $\cA_{M}(x)$, which records the ascent distribution across all permutations of a fixed multiset $M$. Simion~\cite{Simon1984} proved the real-rootedness of $\mathcal{A}_M(x)$, which implies the log-concavity and unimodality of its coefficient sequence. Refinements of $\cA_{M}(x)$ have been studied in contexts such as mixed volumes~\cite{Stanley1981,Ehrenborg1998} and barycentric subdivisions~\cite{Brenti2008,Nevo2011}, and their analytical properties have been further investigated in~\cite{Savage2015,Ma2023}. 

The paper is organized as follows. Section~\ref{SEC:pre} introduces the concept of stability from the viewpoint of group actions and summarizes the algebraic background for our bijective proofs. In Section~\ref{SEC:classical}, we completely determine all stable classical patterns. Section~\ref{SEC:consecutive} studies consecutive patterns, exhibiting both stable and large unstable families. Section~\ref{SEC:application} uses the stability of the pattern $ \underline{12} $ to derive recurrence relations for its distribution over permutations of a restricted multiset. We conclude in Section~\ref{SEC:conclusion} with some open problems.

\section{Preliminaries}\label{SEC:pre}

This section presents the basic definitions and notations for multisets and patterns. We then introduce the concepts of a stable property and a stable pattern, along with some immediate observations. Finally, we establish the necessary algebraic background to demonstrate how our bijective proofs operate in the subsequent sections.

Throughout this paper, let $\bN$ denote the set of non-negative integers and $\bP$ the set of positive integers. For a vector $(k_1,k_2,\ldots,k_n)\in\bN^n$, let $M(k_1,k_2,\ldots,k_n)$ denote the \textit{multiset} $$\{\underbrace{1,\ldots,1}_{k_1},\underbrace{2,\ldots,2}_{k_2},\ldots,\underbrace{n,\ldots,n}_{k_n}\}$$ on $\bP$, where each $i$ appears with \textit{multiplicity} $k_i$. For a multiset $M=M(k_1,k_2,\ldots,k_n)$, the integer $k_1+k_2+\cdots+k_n$ is called the \textit{size} of $M$, denoted by $|M|$. A \textit{permutation} (also known in the literature as a \textit{multipermutation}) of $M$ is a sequence of \textit{length} $|M|$ where each \textit{letter} $i$ occurs exactly $k_i$ times for all $1\leq i\leq n$. We use $M^*$ to denote the set of all permutations of $M$. For a permutation $\pi = \pi_1 \pi_2 \cdots \pi_m$, a subsequence $\pi_i \pi_{i+1} \cdots \pi_{\ell}$ with $1 \le i \le \ell \le m$ is called a \emph{factor} of $\pi$.
The \emph{reduction} of $\pi$ is obtained by
replacing occurrences of the smallest letter by $1$, those of the next smallest by $2$, and so on. 
For a fixed positive integer $n$, let $\mathcal{M}_n$ be the family of all multisets on $\{1,\ldots,n\}$, i.e.,
$\mathcal{M}_n:=\{M(k_1,\ldots,k_n)\ |\  k_1,\ldots,k_n\in\bN\}$.
Let $\Pi_n$ denote the set of all permutations of all multisets in $\mathcal{M}_n$, i.e., $\Pi_n:=\bigcup_{M\in\mathcal{M}_n} M^*$.
The symmetric group $\mathfrak{S}_n$ acts on $\mathcal{M}_n$ by
\[\sigma\cdot M(k_1,\ldots,k_n)=M(k_{\sigma^{-1}(1)},\ldots,k_{\sigma^{-1}(n)})\]
for $\sigma\in\mathfrak{S}_n$ and $M(k_1,\ldots,k_n)\in \cM_n$. This action induces an action on $\Pi_n$ given by
$\sigma\cdot(\pi_1\pi_2\cdots\pi_m)=\sigma(\pi_1)\sigma(\pi_2)\cdots\sigma(\pi_m)$
for $\sigma\in\mathfrak{S}_n$ and $\pi_1\pi_2\cdots\pi_m\in\Pi_n$.

Let $P$ be a property defined on the permutations of $M=M(k_1,k_2,\ldots,k_n)$. For example, $P$ can be ``to be weakly increasing", ``to be unimodal", ``to have $s$ descents" for some fixed $s\in\bN$, and so on. Let $M^*(P)$ denote the subset of $M^*$ consisting of all permutations satisfying $P$. If the size of the set $M^*(P)$ is invariant under $\fS_n$, that is, if $|M^*(P)|=|(\sigma\cdot M)^*(P)|$ for all $\sigma\in\fS_n$, then we say that the property $P$ is \textit{stable on $M$}. Otherwise, we say it is \textit{unstable on $M$}. Furthermore, if $P$ is stable on any multiset $M(k_1,k_2,\ldots,k_n)$, we say that $P$ is \textit{stable}. From the perspective of the generating functions, $P$ is stable if and only if the corresponding generating function
\[G_P(x_1,x_2,\ldots):=\sum_{n\geq 0}\sum_{k_1,k_2,\ldots,k_{n-1}\geq0,k_n>0}\left|\left(M(k_1,k_2,\ldots,k_n)\right)^*(P)\right|\cdot x_1^{k_1}x_2^{k_2}\cdots x_n^{k_n}\]
is a symmetric function in the infinitely many variables $x_1,x_2,\ldots$. For instance, it is obvious that the property $P=$ ``to be weakly increasing" is stable since there is exactly one permutation in $M^*(P)$ for any multiset $M$. If $P$ is not stable on some multiset, we say that $P$ is \textit{unstable}. An example of an unstable property is ``to be up-down". This follows from the observation that the permutation $121$ in $\{1,1,2\}^*$ is up-down, whereas no up-down permutation exists in $\{1,2,2\}^*$. 

In particular, the joint distribution of several statistics is a prominent topic in the study of permutations. For a tuple of statistics \texttt{st}=(\texttt{st}$_1$,\texttt{st}$_2$,$\ldots$,\texttt{st}$_\mu$), we say that \texttt{st} is \textit{stable} if the property ``to have $s_i$ occurrences of \texttt{st}$_i$ for each $i=1,2,\ldots,\mu$" is stable for any $s_1,s_2,\ldots,s_\mu\geq0$.
Pattern occurrences form one of the most important classes of permutation statistics and serve as a primary source of examples in this framework. 
A \emph{pattern} is a reduced word $p=p_1\cdots p_\ell$. 
We say that a permutation $\pi=\pi_1\pi_2\cdots \pi_m\in M^*$ \textit{contains} an occurrence of the pattern $p$ if there is a subsequence $\pi'=\pi_{i_1}\pi_{i_2}\cdots \pi_{i_\ell}$ whose reduction is $p$. If $\pi$ contains no occurrences of $p$, we say that $\pi$ \textit{avoids} $p$. In an occurrence of a \textit{consecutive pattern}, we further require that the letters of the subsequence $\pi'$ stay next to each other (i.e., $i_2=i_1+1$, $i_3=i_2+1$, etc in $\pi'$). Following the notation of~\cite{Kitaev2011}, we distinguish the consecutive patterns by underlining them, like $\underline{11}$, $\underline{12}$, $\underline{21}$, $\underline{132}$, etc. In the literature, occurrences of the patterns $\underline{11}$, $\underline{12}$, and $\underline{21}$ are referred to as \textit{plateaux} (or \textit{levels}), \textit{ascents}, and \textit{descents}, respectively. The patterns $12\cdots\ell$, $\underline{12\cdots\ell}$, $\ell(\ell-1)\cdots1$, and $\underline{\ell(\ell-1)\cdots1}$ are called \textit{monotone} patterns. A \emph{vincular pattern} generalizes the notions of the patterns introduced above~\cite{Kitaev2011}. In an occurrence of a vincular pattern, some consecutive letters may be required to stay next to each other, while other pairs of consecutive letters may not. For example, in an occurrence of the vincular pattern $\underline{21}3$, the letters corresponding to 2 and 1 must be adjacent, while the letter corresponding to 3 is to the right of that corresponding to 1 (without any further restrictions). 

For a pattern $p$ and a permutation $\pi$, let $p(\pi)$ denote the number of occurrences of $p$ in $\pi$. For $s\in\bN$, let $M^*(p;s)$ denote the subset of $M^*$ consisting of all the permutations with $s$ occurrences of the pattern $p$. For instance, the permutation $\pi=211342\in M(2,2,1,1)$ avoids the pattern $\underline{132}$ while it contains three occurrences of the pattern $123$: the subsequences $234$ (occurring once) and $134$ (occurring twice). Thus $\underline{132}(\pi)=0$, $123(\pi)=3$, and $\pi\in M^*(\underline{132};0)\cap M^*(123;3)$. Unless otherwise specified, all patterns considered in this paper are assumed to have distinct letters.

Many properties on permutations of multisets can be restated in terms of patterns. For example, the property ``to be weakly increasing" is equivalent to ``to avoid the pattern $21$"; the property ``to be alternating" means ``to avoid the patterns $\underline{123}$, $\underline{321}$, and $\underline{11}$", and so on. For a given multiset $M$, we say that a pattern $p$ is \textit{stable on $M$} if the property ``to have $s$ occurrences of $p$" for each $s\in\bN$ is stable on $M$, i.e., $|M^*(p;s)|=|(\sigma\cdot M)^*(p;s)|$ for all $\sigma\in\fS_n$. Also, a pattern $p$ is \textit{unstable on $M$} if it is not stable on $M$. Finally, if a pattern $p$ is stable on any multiset $M(k_1,k_2,\ldots,k_n)$, we say that it is \textit{stable}. In other words, the pattern $p$ is stable if and only if its generating function
\[G_p(x_0,x_1,x_2,\ldots):=\sum_{n\geq 0}\sum_{k_1,k_2,\ldots,k_{n-1}\geq0,k_n>0}\sum_{s\geq 0}\left|\left(M(k_1,k_2,\ldots,k_n)\right)^*(p;s)\right| \cdot x_0^sx_1^{k_1}x_2^{k_2}\cdots x_n^{k_n}\]
is symmetric with respect to the infinitely many variables $x_1,x_2,\ldots$. If there is a multiset $M$ on which $p$ is unstable, then the pattern $p$ is \textit{unstable}. The single-letter pattern $1$ is trivially stable. Another simple example is the pattern $\underline{11}$, since renaming the letters in a permutation does not affect the number of plateaux. Indeed, both of these examples generalize to arbitrary length.

\begin{fact}\label{FACT:length1}
	For every positive integer $\ell$, the patterns $11\cdots1$ and $\underline{11\cdots1}$ of length $\ell$ are stable.
\end{fact}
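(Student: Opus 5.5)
The plan is to show directly that the relabeling map $\pi\mapsto\sigma\cdot\pi$ is a bijection from $M^*(p;s)$ onto $(\sigma\cdot M)^*(p;s)$ for every $\sigma\in\fS_n$ and every $s\in\bN$, where $p$ is either $11\cdots1$ or $\underline{11\cdots1}$ of length $\ell$.

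First, since $\sigma$ is a permutation of $\{1,\ldots,n\}$, the map $\pi\mapsto\sigma\cdot\pi$ sends $M^*$ bijectively onto $(\sigma\cdot M)^*$. Indeed, letter $i$ appears $k_i$ times in $\pi$ exactly when letter $\sigma(i)$ appears $k_i=k_{\sigma^{-1}(\sigma(i))}$ times in $\sigma\cdot\pi$. The inverse map is the action of $\sigma^{-1}$.

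Next, I will check that the number of occurrences of $p$ is preserved. A subsequence $\pi_{i_1}\cdots\pi_{i_\ell}$ reduces to $11\cdots1$ exactly when all its letters are equal. Because $\sigma$ is injective, $\pi_{i_j}=\pi_{i_{j'}}$ holds if and only if $\sigma(\pi_{i_j})=\sigma(\pi_{i_{j'}})$. Hence the position sets $(i_1,\ldots,i_\ell)$ giving occurrences in $\pi$ are exactly those giving occurrences in $\sigma\cdot\pi$. The adjacency condition $i_{j+1}=i_j+1$ concerns only positions, which relabeling does not change, so the same argument covers the consecutive pattern $\underline{11\cdots1}$.

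It follows that $p(\sigma\cdot\pi)=p(\pi)$, so the bijection restricts to $M^*(p;s)\to(\sigma\cdot M)^*(p;s)$. This gives the required equality of cardinalities for all $\sigma$, $s$ and $M$. As a sanity check, one can also count explicitly: $11\cdots1(\pi)=\sum_i\binom{k_i}{\ell}$, which is visibly symmetric in the $k_i$. For the consecutive pattern, the claim amounts to saying that the multiset of run lengths of equal letters is transported by $\sigma$. No step here is a real obstacle; the only point needing care is that injectivity of $\sigma$ is what preserves equality of letters.
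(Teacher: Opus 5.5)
Your proposal is correct and follows essentially the same reasoning the paper gives: the paper justifies this fact only by observing that renaming the letters of a permutation (the action $\pi\mapsto\sigma\cdot\pi$) does not change the number of occurrences of $11\cdots1$ or $\underline{11\cdots1}$. You spell that out carefully, via injectivity of $\sigma$ for equality of letters and the fact that positions are untouched for the adjacency condition.
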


The bijective proofs of stability in this paper rely on the following well-known algebraic fact.

\begin{fact}[{\cite[p.~4]{bookSagan}}]\label{FACT:transpositon}
Any permutation in the symmetric group $\mathfrak{S}_n$ can be expressed as a product of the adjacent transpositions $(1,2), (2,3), \ldots, (n-1,n)$, where $(i,i+1)$ denotes the permutation that swaps $i$ and $i+1$ and fixes all other elements.
\end{fact}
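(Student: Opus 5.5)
We will prove Fact~\ref{FACT:transpositon} by induction on the number of inversions of a permutation. For $\sigma\in\fS_n$ write it in one-line notation $\sigma(1)\sigma(2)\cdots\sigma(n)$, and let
\[\operatorname{inv}(\sigma):=\bigl|\{(a,b)\ :\ 1\le a<b\le n,\ \sigma(a)>\sigma(b)\}\bigr|.\]
The base case is $\operatorname{inv}(\sigma)=0$. Then $\sigma(1)<\sigma(2)<\cdots<\sigma(n)$, so $\sigma$ is the identity. The identity is the empty product of adjacent transpositions, or equivalently $(1,2)(1,2)$ when $n\ge 2$; for $n=1$ there is nothing to prove.

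For the inductive step, suppose $\operatorname{inv}(\sigma)>0$. Then $\sigma$ has a descent: some $i$ with $\sigma(i)>\sigma(i+1)$, since otherwise the one-line word is increasing and has no inversions. Set $\tau=\sigma\circ(i,i+1)$. In one-line notation, $\tau$ is $\sigma$ with the entries in positions $i$ and $i+1$ swapped.

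The key computation is $\operatorname{inv}(\tau)=\operatorname{inv}(\sigma)-1$. We check this by comparing pairs of positions.
\begin{enumerate}
\item Pairs $(a,b)$ with $\{a,b\}\cap\{i,i+1\}=\emptyset$ are unaffected.
\item For a position $c\notin\{i,i+1\}$, the pairs $\{c,i\}$ and $\{c,i+1\}$ involve the values $\sigma(c)$, $\sigma(i)$, $\sigma(i+1)$. Position $c$ lies on the same side of both $i$ and $i+1$. Hence the set of inversions formed between $c$ and the two values $\sigma(i),\sigma(i+1)$ is merely relabeled by the swap, and their number is preserved.
\item The pair $(i,i+1)$ was an inversion in $\sigma$ and is no longer one in $\tau$.
\end{enumerate}

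By the induction hypothesis, $\tau$ is a product of adjacent transpositions. Since $(i,i+1)^2$ is the identity, $\sigma=\tau\circ(i,i+1)$, which is again such a product. This completes the induction. It amounts to bubble sort, and it shows in addition that $\operatorname{inv}(\sigma)$ adjacent transpositions suffice.

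The only step needing care is the inversion count in item (ii), namely that the swap does not change inversions involving outside positions. This reduces to the observation that the relative order of $c$ with respect to $i$ and to $i+1$ is the same, so it is routine. Everything else is bookkeeping.
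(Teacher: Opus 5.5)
Your proof is correct and complete. At a descent $i$, the swap $\tau=\sigma\circ(i,i+1)$ changes only the status of the pair $(i,i+1)$. For $c\notin\{i,i+1\}$, the number of inversions between position $c$ and positions $i,i+1$ depends only on the unordered set $\{\sigma(i),\sigma(i+1)\}$. Hence $\operatorname{inv}$ drops by exactly one and the induction closes. The empty-product convention for the identity is genuinely needed when $n=1$, and you handle it.

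The paper gives no proof of this fact. It only cites Sagan's book for this standard result, so there is no argument of the authors' to compare with.

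For context, the textbook route is usually different. One decomposes $\sigma$ into disjoint cycles and writes each cycle as a product of arbitrary transpositions. Then each transposition $(a,b)$ with $a<b$ is expressed as a conjugate of an adjacent one, $(a,b)=(b-1,b)\cdots(a+1,a+2)(a,a+1)(a+1,a+2)\cdots(b-1,b)$. That route reuses the cycle machinery but gives no useful control on the number of factors. Your bubble-sort induction is self-contained and gives the sharper statement that $\operatorname{inv}(\sigma)$ adjacent transpositions suffice. This is in fact the minimum, since each adjacent transposition changes $\operatorname{inv}$ by exactly $\pm1$, but you do not need that.

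Either way, generation by the $(i,i+1)$ is all the paper uses. Invariance of $|M^*(p;s)|$ under each single swap $M\mapsto M_i$ then propagates to invariance under all of $\fS_n$.
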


Given a multiset $M=M(k_1,k_2,\ldots,k_n)$ and an integer $i\in\{1,2,\ldots,n-1\}$,
we assume, without loss of generality, that $k_1,k_2,\ldots,k_n$ are positive integers unless otherwise stated.
Let $M_i$ denote the multiset $(i,i+1)\cdot M$, i.e., $M_i=M(k_1\ldots,k_{i-1},k_{i+1},k_i,k_{i+2},\ldots,k_n)$. Fact~\ref{FACT:transpositon} implies that, to verify the stability of a pattern $p$, we only need to test the equality of sizes of the sets $M^*(p;s)$ and $M^*_i(p;s)$ for any multiset $M=M(k_1,k_2,\ldots,k_n)$, any $i=1,2,\ldots,n-1$, and any $s\in\bN$. Similarly to the approach in~\cite[Section 4]{Savage2006}, our basic idea is to construct an appropriate bijection between $M^*$ and $M^*_i$ that preserves the distribution of $p$. To this end, we introduce some fundamental bijections. Let $r$ denote the reverse operation on $M^*$, defined by $r(\pi_1\pi_2\cdots \pi_m):=\pi_m\pi_{m-1}\cdots \pi_1$. For any $i=1,2,\ldots,n-1$, the mapping $\tau_i$ from $M^*$ to $M^*_i$ sends a permutation $\pi$ to $(i,i+1)\cdot \pi$. 

\begin{exa}\label{EX:classical-basic-bij}
	Let $M$ be the multiset $M(2,4,2,1)$ and $i=1$. Then we have $M_1=M(4,2,2,1)$. For $\pi=321432212\in M^*$, we have $\tau_{1}(\pi)=312431121\in M_1^*$.
\end{exa}

\section{Stability and classical patterns}\label{SEC:classical}

This section is devoted to a complete classification of stable classical patterns.

The stability of the pattern 21 follows immediately from the classical identity
\[
\sum_{\pi\in (M(k_1,k_2,\ldots,k_n))^*}q^{21(\pi)}
=\begin{bmatrix}k_1+\cdots+k_n\\k_1,\ldots,k_n\end{bmatrix}_q
=\frac{[k_1+\cdots+k_n]_q!}{[k_1]_q!\cdots[k_n]_q!},
\]
which is symmetric in the multiplicities and gives the distribution of $21$~\cite[Item 105]{MacMahon1915}. 
In what follows, we provide an alternative proof based on a bijective construction.

\begin{thm}\label{THM:classical-length2-stable}
	The length-$2$ classical patterns, i.e., the patterns $12$ and $21$, are stable.
\end{thm}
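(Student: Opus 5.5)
By Fact~\ref{FACT:transpositon}, it is enough to fix a multiset $M=M(k_1,\ldots,k_n)$, an index $i\in\{1,\ldots,n-1\}$, and $s\in\bN$, and to give a bijection $M^*(p;s)\to M_i^*(p;s)$ for $p\in\{12,21\}$. We first reduce to the case $p=21$. In any $\pi\in M^*$, every pair of positions $a<b$ has exactly one of $\pi_a<\pi_b$, $\pi_a>\pi_b$, $\pi_a=\pi_b$. Hence
\[12(\pi)+21(\pi)=\binom{|M|}{2}-\sum_{j}\binom{k_j}{2}.\]
The right-hand side is invariant under permuting the multiplicities, so it is the same for $M$ and $M_i$. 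Therefore, if a bijection preserves the number of $21$-occurrences, it also preserves the number of $12$-occurrences.

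For $p=21$, we look at the map $\tau_i$. It changes $21(\pi)$ only through pairs of positions whose letters are $\{i,i+1\}$. Every other pair keeps its relative order, because $i$ and $i+1$ are adjacent values and so compare identically with every other letter $j$. Now restrict $\pi$ to the subword $w$ formed by its letters $i$ and $i+1$. If $w$ has $k_i$ copies of $i$ and $k_{i+1}$ copies of $i+1$, the number of inversions $(i+1)\cdots i$ in $w$ is $\mathrm{inv}(w)$. After applying $\tau_i$, that number becomes the number of non-inversions of $w$, namely $k_ik_{i+1}-\mathrm{inv}(w)$. So $\tau_i$ alone does not preserve the statistic.

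We fix this by composing with a second operation. Keep the positions occupied by $\{i,i+1\}$ fixed, and replace the subword $w$ in those positions by $r(w)$; this reverses only the subsequence of $i$'s and $(i+1)$'s and leaves every other letter in place. Reversal sends inversions of $w$ to non-inversions of $w$, so $\mathrm{inv}(r(w))=k_ik_{i+1}-\mathrm{inv}(w)$. Swapping letters afterwards gives $\mathrm{inv}(\tau_i(r(w)))=\mathrm{inv}(w)$. Define
\[\phi_i(\pi)=\text{the permutation obtained from } \pi \text{ by replacing } w \text{ with } \tau_i(r(w)) \text{ in the same positions}.\]
Then $\phi_i(\pi)\in M_i^*$. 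Its inverse is the analogous map from $M_i^*$ to $M^*$, since both reversal and letter-swapping are involutions.

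It remains to check that $\phi_i$ preserves $21(\pi)$. Pairs that do not involve $i$ or $i+1$ are unchanged. Pairs inside $w$ contribute $\mathrm{inv}(w)$, both before and after, by the computation above. For mixed pairs, take a letter $j\notin\{i,i+1\}$ at a fixed position. The set of positions holding letters in $\{i,i+1\}$ does not change, and $j$ compares the same way with $i$ as with $i+1$. So the number of inversions between $j$ and the $\{i,i+1\}$-letters depends only on those positions, not on which of the two letters sits where. This is the step that needs care, because it is the reason the reversal has to be done inside the fixed positions rather than on the whole word. Combining these three cases, $21(\phi_i(\pi))=21(\pi)$, and the reduction above then covers $12$ as well.
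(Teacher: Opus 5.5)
Your proof is correct and is essentially the paper's: your $\phi_i$ is exactly the paper's $\Psi_i$ (reverse the subword of $i$'s and $(i+1)$'s within its own positions, then apply $\tau_i$), justified by the same split into pairs inside the subword, mixed pairs, and pairs avoiding $i,i+1$. The only difference is cosmetic: you check the count for $21$ and pass to $12$ via the total-pairs identity, while the paper checks $12$ directly.
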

\begin{proof}
	It is sufficient to show that the pattern $12$ is stable. For any multiset $M=M(k_1,k_2,\ldots,k_n)$  with $\displaystyle\sum_{t=1}^{n}k_t=m$, and any $i=1,2,\ldots,n-1$, we first define a mapping $\Psi_i:\,M^*\to M^*_i$. Given a permutation $\pi\in M^*$, write $\pi$ as the concatenation of several factors:
	\begin{align}\label{EQ:i-i+1-decom}
	\pi=x_1X_1x_2X_2\cdots X_{\rho-1}x_{\rho},
	\end{align}
	where $x_1$ and $x_{\rho}$ can be empty, and $X_j$ is a longest run of letters $i$ and $i+1$. Let $r(X_1X_2\cdots X_{\rho-1})=Y_1Y_2\cdots Y_{\rho-1}$, where each $Y_j$ has the same length as  $X_j$ for $j=1,2,\ldots,\rho-1$. We define $\Psi_{i}(\pi)$ as the permutation $\tau_{i}(x_1Y_1x_2Y_2\cdots Y_{\rho-1}x_{\rho})$. Once we show that $\Psi_{i}$ is a bijection between $M^*(12;s)$ and $M_i^*(12;s)$ for all $s\in\bN$, it will follow from Fact~\ref{FACT:transpositon} that the pattern $12$ is stable.
	
	Let $m=|M|$ and $s_1$ is a non-negative integer. For any $\pi^{1}=\pi^{1}_1\pi^1_2\cdots \pi^1_m\in M^*(12;s_1)$, assume that $\pi^2=\Psi_{i}(\pi)=\pi^2_1\pi^2_2\cdots \pi^2_m\in M_i^*(12;s_2)$  for $s_2\in\bN$ and let $$S_k:=\{(j,j')\ |\ \pi^k_j\pi^k_{j'}\text{ is a subsequence of }\pi^k\text{ such that }\pi^k_j<\pi^k_{j'}\}\text{ for }k=1,\,2.$$
	Then the size of $S_k$ is exactly $s_k$ for $k=1,\,2$. Furthermore, let $S_{k,\,1}:=\{(j,j')\in S_k\ |\ \pi^k_j=i,\,\pi^k_{j'}=i+1\}$ and $S_{k,\,2}:=\{(j,j')\in S_k\ |\ \{\pi^k_j,\,\pi^k_{j'}\}\neq\{i,i+1\}\}$ for $k=1,\,2$. Then, $S_k$ is the disjoint union of $S_{k,\,1}$ and $S_{k,\,2}$ for $k=1,\,2$. It is immediate to see that $S_{1,\,2}=S_{2,\,2}$ since the letters $i$ and $i+1$ are indistinguishable with respect to the other letters when forming an occurrence of $12$. Thus, it remains to show that $|S_{1,\,1}|=|S_{2,\,1}|$. Let $X^k$ be the longest subsequence of letters $i$ and $i+1$ in $\pi^k$ for $k=1,\,2$. Then, the set $S_{1,\,1}$ (resp., $S_{2,\,1}$) is given by all the occurrences of $12$ in $X^1$ (resp., $X^2$). Note that $X^2$ can be obtained from $X^1$ by reversing and complementing if we view $X^k$'s as words over the alphabet $\{i,i+1\}$, so the number of the occurrences of $12$ in $X^1$ is equal to that in $X^2$. This means that the sets $S_{1,\,1}$ and $S_{2,\,1}$ are equinumerous, completing the proof.
\end{proof}

We provide an example to illustrate the mapping $\Psi_{i}$ in the proof of Theorem~\ref{THM:classical-length2-stable}.

\begin{exa}\label{EX:-classical-12}
	Let $M=M(2,4,2,1)$, $i=1$, and $\pi=321432212$ be as in Example~\ref{EX:classical-basic-bij}. Writing $\pi$ in the form as that in Equation~\eqref{EQ:i-i+1-decom}, we have $x_1=3$, $X_1=21$, $x_2=43$, $X_2=2212$ and $x_3$ is empty. Since $r(X_1X_2)=212212$ can be written as $Y_1Y_2$, where $Y_1=21$ and $Y_2=2212$, we have $\Psi_{1}(\pi)=\tau_{1}(x_1Y_1x_2Y_2x_{3})=\tau_{1}(321432212)=312431121$. Both $\pi$ and $\Psi_1(\pi)$ have nine occurrences of the pattern $12$. In particular, we have $S_{1,\,2}=S_{2,\,2}=\{(1,4),(2,4),(2,5),(3,4),(3,5)\}$.
\end{exa}

Recall that an \textit{inversion} in a word is an occurrence of the pattern 21.

\begin{cor}
	For any $s\in\bN$, the property ``to have $s$ inversions" is stable. 
\end{cor}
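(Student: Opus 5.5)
The corollary follows directly from Theorem~\ref{THM:classical-length2-stable}. By definition, an inversion in a word is an occurrence of the pattern $21$. So the property ``to have $s$ inversions'' on permutations of a multiset $M$ is exactly membership in $M^*(21;s)$. Stability of the pattern $21$ means precisely that $|M^*(21;s)|=|(\sigma\cdot M)^*(21;s)|$ for every multiset $M\in\cM_n$, every $\sigma\in\fS_n$ and every $s\in\bN$. Fixing $s$, this is the definition of the property ``to have $s$ inversions'' being stable on every $M$, hence stable.

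I would add a short remark on why $21$ is covered, since the proof of Theorem~\ref{THM:classical-length2-stable} only treats $12$ explicitly. The reverse map $r$ is a bijection from $M^*$ to itself and sends occurrences of $12$ to occurrences of $21$. Hence $|M^*(21;s)|=|M^*(12;s)|$ for every $M$ and $s$, and stability of $12$ transfers to $21$.

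For an explicit bijection, take $r\circ\Psi_i\circ r$. This maps $M^*(21;s)$ onto $M_i^*(21;s)$. Combined with Fact~\ref{FACT:transpositon}, it gives the required equality for all $\sigma\in\fS_n$.

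No step is a real obstacle: the statement is a direct translation of the theorem into the language of properties. The only point needing care is the definitional one, namely that ``stable pattern'' requires equality of $|M^*(p;s)|$ for all $s$ simultaneously. This is stronger than, and hence implies, stability of each individual property ``to have $s$ occurrences''.
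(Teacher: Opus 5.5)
Your proposal is correct and follows the paper's route: the corollary is Theorem~\ref{THM:classical-length2-stable} for the pattern $21$, with ``inversion'' read as ``occurrence of $21$'' and the definitions unpacked. Your use of the reversal map $r$ also fills a step the paper leaves unstated in ``it is sufficient to show that the pattern $12$ is stable''. The map $r$ keeps the multiset fixed and exchanges occurrences of $12$ and $21$, so $r\circ\Psi_i\circ r$ is the explicit bijection for $21$.
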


For any classical pattern of length greater than two, we construct a universal example to show that this pattern is unstable.

\begin{lem}\label{LEM:classical-length3-unstable}
	For any classical pattern $p$ of length greater than two, the property ``to have one occurrence of $p$" is unstable.
\end{lem}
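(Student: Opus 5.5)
The plan is to exhibit, for an arbitrary classical pattern $p=p_1\cdots p_\ell$ with $\ell\ge 3$ and distinct letters, one explicit multiset $M$ on $\{1,\ldots,\ell\}$ and one adjacent transposition $(i,i+1)$ such that $|M^*(p;1)|\neq|M_i^*(p;1)|$. By Fact~\ref{FACT:transpositon} and the definition of stability, this suffices.

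Two easy test families cannot work, and I will record this at the start of the proof so the choice of $M$ is motivated.
\begin{itemize}[label=$\circ$]
\item \emph{Exactly one doubled letter.} If every letter of $\{1,\ldots,\ell\}$ has multiplicity one except a single letter $j$ of multiplicity two, then any occurrence of $p$ uses all $\ell$ letters. The words with exactly one occurrence are then obtained by inserting an extra $j$ into $p$ at a slot not adjacent to the existing $j$. This gives $\ell-1$ words independently of $j$, so the count is symmetric.
\item \emph{Zero multiplicities, or a single tripled letter.} Zero multiplicities only relabel, and the examples I checked with one letter of multiplicity three are also balanced.
\end{itemize}
Hence I will take two letters of multiplicity two.

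The concrete choice is to keep all letters of $p$ with multiplicity one except that the letters $p_1$ and $p_\ell$ are doubled. I then compare with the multiset obtained by moving one of these doublings to an adjacent value, i.e.\ swapping multiplicities $2$ and $1$ between $i$ and $i+1$. The counting proceeds as follows.
\begin{enumerate}
\item Each occurrence of $p$ picks one copy of every letter. The non-doubled letters are forced, so $p(\pi)$ is a sum over pairs of copies of the doubled letters of an indicator that the chosen subsequence reduces to $p$.
\item Condition on the position pattern of the singleton letters. The number of occurrences then becomes a small bilinear count, such as ``number of copies of $a$ before $x$ times number of copies of $b$ after $y$''.
\item Count the permutations for which this bilinear form equals $1$.
\end{enumerate}

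As a model, take $p=123$. For $M(1,2,2)$ the occurrence count of a permutation is a sum, over the copies of $2$ that appear after the letter $1$, of the number of copies of $3$ after that copy. Splitting according to the suffix after the $1$, and using the coefficient of $q$ in the relevant $q$-binomial coefficients, gives $|M(1,2,2)^*(123;1)|=2+1+1+1=5$. For $M(2,1,2)$ the occurrence count is $a\cdot b$, with $a$ the number of $1$'s before the $2$ and $b$ the number of $3$'s after it. Requiring $a=b=1$ gives $2\cdot 2=4$ permutations. So $123$ is unstable on $M(1,2,2)$ under the transposition $(1,2)$, and analogous direct counts handle the other length-$3$ patterns.

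For general $\ell$, the key step is to reduce to this length-$3$ computation. The idea is to make the extra copies interact only with a consecutive triple of values (or positions) in $p$, while the remaining letters are singletons and are forced to sit exactly as in $p$ in any occurrence. I will then show that the counts for $M$ and $M_i$ differ by the same discrepancy as in the corresponding length-$3$ case. I also expect to need symmetries of the pattern (reverse, complement, inverse where applicable) to reduce the number of cases.

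The main obstacle is making this reduction uniform over all $p$. The difficulty is that the extra copies can create occurrences in which the singleton letters play different roles. If the plain ``double two letters'' choice does not isolate cleanly, the first fallback is to enlarge $M$ with padding letters whose relative order is rigid. An alternative fallback is to pick the transposition $(i,i+1)$ according to whether $p$ begins or ends with its extreme values, and to settle a few exceptional patterns by direct computation.
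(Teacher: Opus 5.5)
Your counting framework is sound: any occurrence must use every letter, the simple letters must appear in pattern order, and the number of occurrences is then a product or bilinear count in the copies of the repeated letters. Your $\ell=3$ computation for $123$ ($5$ versus $4$) is also correct.

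The gap is the general case. The ``reduction to a length-$3$ computation'' is never carried out. Moreover, the concrete construction you commit to does not work for every $p$: you double $p_1$ and $p_\ell$ and then apply a single adjacent transposition $(i,i+1)$. Take $p=2143$ (likewise $3412$), where the doubled values are $2,3$.
\begin{itemize}
\item The transposition $(2,3)$ fixes $M$.
\item The transpositions $(1,2)$ and $(3,4)$ move the doublings to the letters at positions $\{2,4\}$ and $\{1,3\}$ of $p$, respectively.
\end{itemize}
All these multisets have the same number ($10$) of permutations with exactly one occurrence. So no adjacent transposition out of your $M$ witnesses instability, and the fallbacks you list are not specific enough to close this.

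The missing observation is the following. With two doubled letters and all others simple, $|M^*(p;1)|$ depends only on whether the doubled letters occupy adjacent positions of $p$. Fix the $\ell-2$ simple letters in pattern order; this creates $\ell-1$ gaps.
\begin{itemize}
\item If the doubled positions $u<v$ satisfy $v>u+1$, the two doubled letters have distinct windows, each a single gap. Each window must contain exactly one copy of its letter. Summing over the placements of the two surplus copies gives $(\ell+1)(\ell-2)$.
\item If $v=u+1$, both windows are one common gap. That gap must contain exactly one pair ``copy of $p_u$ before copy of $p_v$''. Counting the cases of four, three, or two copies in that gap gives $1+2(\ell-2)+(\ell-1)(\ell-2)=\ell^2-\ell-1$.
\end{itemize}
For $\ell=3$ these are your $4$ and $5$.

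Stability quantifies over all $\sigma\in\mathfrak{S}_n$, so you are not restricted to adjacent transpositions. You may simply swap the multiplicities of $p_2$ and $p_\ell$. This moves the doublings from positions $\{1,\ell\}$ to $\{1,2\}$ and changes the count by exactly $1$, which completes the proof. This is the same move the paper makes, except that the paper gives $p_\ell$ multiplicity $3$ and obtains $\ell(\ell^2-5)/2$ versus $\ell(\ell^2-3)/2$.
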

\begin{proof}
	Suppose that the pattern $p$ can be written as $p=p_1p_2\cdots p_\ell$ with $\ell\geq3$. It is sufficient to construct two multisets $M=M(k_1,k_2,\ldots,k_\ell)$ and $\overline{M}$, where $\overline{M}$ can be obtained from $M$ by swapping $k_{p_2}$ and $k_{p_\ell}$, and show that the sets $M^*(p;1)$ and $\overline{M}^*(p;1)$ are not equinumerous.
	
	Let $M$ be the multiset $M(k_1,k_2,\ldots,k_\ell)$ with $k_{p_1}=2$, $k_{p_\ell}=3$ and $k_i=1$ for all $i\in\{1,2,\ldots,\ell\}\setminus\{p_1,\,p_\ell\}$. We will prove that $|M^*(p;1)|=(\ell^3-5\ell)/2$. 
	It is easy to see that any permutation in $M^*(p;1)$ must contain a subsequence $p$. Therefore, one can construct a permutation in $M^*(p;1)$ from the permutation $\pi=p$ by inserting one letter $p_1$ and then two letters $p_\ell$'s. The letter $p_1$ must be inserted after the letter $p_2$ in $\pi$ to ensure that the resulting permutation $\pi'$ contains exactly one occurrence of the pattern $p$. For the same reason, two letters $p_\ell$'s must be inserted before the letter $p_{\ell-1}$ in $\pi'$ to obtain the final permutation $\pi''\in M^*(p;1)$. If $\pi'$ is of the form $p_1p_2\cdots p_{\ell-1}p_1p_\ell$ or $p_1p_2\cdots p_{\ell-1}p_\ell p_1$, then there are $\binom{\ell}{2}$ possibilities to insert two $p_\ell$'s; otherwise, there are $\ell-3$ possibilities for $\pi'$ and each $\pi'$ results in $\binom{\ell+1}{2}$ distinct $\pi''$'s. Consequently, there are exactly $2\cdot \binom{\ell}{2}+(\ell-3)\cdot\binom{\ell+1}{2}=\ell(\ell^2-5)/2$ elements in $M^*(p;1)$.
	
	Swapping $k_{p_2}$ and $k_{p_\ell}$, we obtain the multiset $\overline{M}$. Similarly to the above, a permutation in $\overline{M}^*(p;1)$ can be constructed from the permutation $\pi=p$ by inserting one letter $p_1$ and then two letters $p_2$'s. In this case, the only special case for $\pi'$ is of the form $p_1p_2p_1p_3\cdots p_{\ell-1}p_\ell$, contributing $\binom{\ell}{2}$ distinct $\pi''$'s; the other $(\ell-2)$ $\pi'$'s each allow $\binom{\ell+1}{2}$ ways to place the extra two $p_2$'s. Therefore, we have $|\overline{M}^*(p;1)|=1\cdot \binom{\ell}{2}+(\ell-2)\cdot\binom{\ell+1}{2}=\ell(\ell^2-3)/2$, which is never equal to $\ell(\ell^2-5)/2$, the size of the set $M^*(p;1)$. This completes the proof.
\end{proof}

Combining Fact~\ref{FACT:length1}, Theorem~\ref{THM:classical-length2-stable}, and Lemma~\ref{LEM:classical-length3-unstable}, we can fully characterize the stability of all classical patterns.

\begin{thm}\label{THM:classic-pattern-charact}
	A classical pattern is stable if and only if its length is one or two.
\end{thm}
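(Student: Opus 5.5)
The plan is to assemble the characterization from the three results already established, treating the two directions separately.

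For sufficiency, suppose the classical pattern $p$ has length one or two. A length-one pattern is the single letter $1$, which is stable by Fact~\ref{FACT:length1}. Patterns here are reduced words, so a length-two pattern is one of $11$, $12$ or $21$. The pattern $11$ is covered by Fact~\ref{FACT:length1} with $\ell=2$. The patterns $12$ and $21$ are stable by Theorem~\ref{THM:classical-length2-stable}. This covers every pattern of length at most two, including the one with a repeated letter, even though the paper's standing convention is that patterns have distinct letters.

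For necessity, let $p$ have length $\ell\geq 3$. By Lemma~\ref{LEM:classical-length3-unstable}, the property ``to have one occurrence of $p$'' is unstable. Hence there are a multiset $M$ and some $\sigma\in\fS_n$ with $|M^*(p;1)|\neq|(\sigma\cdot M)^*(p;1)|$. This is exactly the failure of the defining condition of stability at $s=1$, so $p$ is unstable.

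The only point that needs care is whether Lemma~\ref{LEM:classical-length3-unstable} covers all patterns of length at least three. Its proof uses the letters $p_1,p_2,p_\ell$ as indices of distinct multiplicities, which implicitly assumes distinct letters, in line with the paper's convention. If patterns with repeated letters (for example $112$) had to be included, I would need a separate small counterexample for each, or else invoke the convention explicitly. Since the paper assumes distinct letters throughout, I would simply state that assumption in the proof, and the combination of the three results then completes it.
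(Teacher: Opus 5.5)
Your proposal is correct and follows the paper's own argument, which obtains the theorem by combining Fact~\ref{FACT:length1}, Theorem~\ref{THM:classical-length2-stable}, and Lemma~\ref{LEM:classical-length3-unstable}. One correction to your closing remark: the distinct-letter convention is essential, not merely convenient, because $111$ (indeed every $11\cdots1$ of length at least three) is stable by Fact~\ref{FACT:length1}, so no counterexample exists for it and the ``only if'' direction would be false if patterns with repeated letters were allowed.
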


\begin{rem}
	For a pattern $p$ of length greater than two, note that the proof of Lemma~\ref{LEM:classical-length3-unstable} does not extend to any $s\in\bN$ to show that the property ``to have $s$ occurrences of $p$" is unstable. In fact, when $s=0$, the property becomes stable for some $p$. A classical result in this direction is that the property ``to avoid the classical pattern $p$" for any length-$3$ $p$ is stable, see~\cite{Atkinson1995,Albert2001,Savage2006} and~\cite{Zeilberger2005} with a detailed review available in~\cite[Section 3.2]{Shar2016}. We note that the bijection given in~\cite[Section 4]{Savage2006} can be extended to show that the property ``to avoid the pattern $12\cdots\ell$" is stable for any positive integer $\ell$. This result was also established by Albert et al.~\cite{Albert2001} using the Robinson-Schensted-Knuth correspondence between permutations of multisets and Young tableau pairs. The enumeration result of Heubach and Mansour~\cite{Heubach2006} implies a similar result for $p=11\cdots1$ of any positive length.
\end{rem}

\section{Stability and consecutive patterns}\label{SEC:consecutive}

In this section, we begin by giving a bijective proof of the stability of all monotone consecutive patterns. We then complement this with the exhibition of a broad family of unstable ones.

The stability of monotone consecutive patterns follows from the classical run theorem, which gives an explicit expression for the generating function of increasing runs in terms of elementary symmetric functions~\cite{Gessel1977,Goulden1983,Jackson1977}. Subsequently, Yang and Zeilberger presented explicit generating functions for the distribution of increasing consecutive patterns and exploited their symmetry to derive recurrence relations that admit efficient computation. In particular, the generating function for the pattern $\underline{21}$ was given by
\begin{align}\label{EQ:GF-12_}
\nonumber
&\,G_{\underline{21}}(x_0,x_1,x_2,\ldots)=\\
&\,\frac{1}{1-\sum_{i\geq1}x_i+(1-x_0)\sum_{1\leq i_1<i_2}x_{i_1}x_{i_2}-(1-x_0)^2\sum_{1\leq i_1<i_2<i_3}x_{i_1}x_{i_2}x_{i_3}+\cdots}
\end{align}
and dates back to MacMahon~\cite[Item 155]{MacMahon1915} (where occurrences of $\underline{21}$ are referred to as ``major contacts''). In what follows, we provide an alternative bijective proof.

\begin{lem}\label{LEM:cons-length-2}
	The patterns $\underline{12}$ and $\underline{21}$ are stable.
\end{lem}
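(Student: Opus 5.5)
The plan is to construct, for each $i\in\{1,\ldots,n-1\}$, a bijection $\Phi_i:M^*\to M_i^*$ that preserves the number of occurrences of $\underline{12}$, and then to invoke Fact~\ref{FACT:transpositon}, exactly as in the proof of Theorem~\ref{THM:classical-length2-stable}. The pattern $\underline{21}$ is then handled by the reverse map. Indeed, $r$ is a bijection of $M^*$ onto itself that sends occurrences of $\underline{12}$ to occurrences of $\underline{21}$ and vice versa. Hence $|M^*(\underline{21};s)|=|M^*(\underline{12};s)|$ for every $M$ and $s$, and stability of $\underline{12}$ implies stability of $\underline{21}$.

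For $\underline{12}$, I decompose $\pi\in M^*$ as in \eqref{EQ:i-i+1-decom}, namely $\pi=x_1X_1x_2X_2\cdots X_{\rho-1}x_\rho$, where each $X_j$ is a maximal factor consisting only of the letters $i$ and $i+1$. Unlike the classical case, I would \emph{not} reverse the concatenation $X_1\cdots X_{\rho-1}$ globally. Instead I transform each block separately: set $Y_j:=\tau_i(r(X_j))$, that is, reverse the block and then swap $i\leftrightarrow i+1$ inside it. Define $\Phi_i(\pi):=\tau_i(x_1)Y_1\tau_i(x_2)Y_2\cdots Y_{\rho-1}\tau_i(x_\rho)$. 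Since the $x_j$ contain neither $i$ nor $i+1$, we have $\tau_i(x_j)=x_j$. Each block keeps its length and position, and the letter multiplicities of $i$ and $i+1$ are swapped globally, so $\Phi_i(\pi)\in M_i^*$. The map $\Phi_i$ is invertible: the analogous map $M_i^*\to M^*$, applied to $\Phi_i(\pi)$, recovers $\pi$. The blocks of $\Phi_i(\pi)$ occupy the same positions as those of $\pi$, and the block operation $\tau_i\circ r$ is an involution.

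Then I check that ascents are preserved, splitting the adjacent pairs $\pi_t\pi_{t+1}$ into three types.

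\emph{Pairs with both letters outside $\{i,i+1\}$.} These are untouched.

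\emph{Pairs with exactly one letter in $\{i,i+1\}$.} Such a pair sits at a block boundary. The other letter is either $<i$ or $>i+1$, so whether the pair is an ascent does not depend on which of $i,i+1$ occupies the boundary position. Block boundaries are unchanged by $\Phi_i$, so these ascents are preserved.

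\emph{Pairs inside a block.} Here an ascent is a factor $i\,(i+1)$. Reversal turns it into $(i+1)\,i$, and complementing turns that back into $i\,(i+1)$. Hence the adjacent ascents of $X_j$ correspond bijectively to those of $Y_j$.

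Summing over the three types gives $\underline{12}(\Phi_i(\pi))=\underline{12}(\pi)$. The only delicate point is the choice of per-block reversal rather than the global reversal used for the classical pattern $12$. A global reversal would redistribute letters across blocks, which changes within-block adjacencies and could also alter which letter sits at a boundary. Once the per-block choice is made, the verification is routine.
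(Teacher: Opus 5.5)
Your proposal is correct and uses the same bijection as the paper: your $\Phi_i$ (reverse each maximal $\{i,i+1\}$-block and swap $i\leftrightarrow i+1$) coincides with the paper's $\tau_i(x_1r(X_1)x_2r(X_2)\cdots r(X_{\rho-1})x_\rho)$, and both arguments conclude with Fact~\ref{FACT:transpositon}. Your explicit three-case check of adjacent pairs, and the reduction of $\underline{21}$ to $\underline{12}$ via $r$, simply fill in details that the paper leaves to the reader.
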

\begin{proof}
Let $M$ be any multiset $M(k_1,k_2,\ldots,k_n)$ and $i=1,2,\ldots,n-1$. We define a bijection $\Phi_i$ from $M^*$ to $M^*_i$ as follows. Given a permutation $\pi\in M^*$, decompose $\pi$ as in~\eqref{EQ:i-i+1-decom}. The permutation $\Phi_{i}(\pi)$ is given by $\tau_{i}(x_1r(X_1)x_2r(X_2)\cdots r(X_{\rho-1})x_\rho)$. Then replacing the mapping $\Psi_{i}$ in the proof of Theorem~\ref{THM:classical-length2-stable} with $\Phi_{i}$, one can similarly prove that $\Phi_{i}$ is a bijection between $M^*(\underline{12};s)$ and $M_i^*(\underline{12};s)$ for all $s\in\bN$. The lemma then follows from Fact~\ref{FACT:transpositon}.
\end{proof}

This leads to the following stability result for the descent numbers.

\begin{cor}
	For any $s\in\bN$, the property ``to have $s$ descents'' is stable.
\end{cor}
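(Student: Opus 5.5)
The corollary is a restatement of Lemma~\ref{LEM:cons-length-2} in the language of properties. A descent in a permutation $\pi=\pi_1\cdots\pi_m$ is an index $j$ with $\pi_j>\pi_{j+1}$, which is exactly an occurrence of the consecutive pattern $\underline{21}$. Hence the property ``to have $s$ descents'' is the property ``to have $s$ occurrences of $\underline{21}$''. By the definitions in Section~\ref{SEC:pre}, the pattern $\underline{21}$ is stable exactly when $|M^*(\underline{21};s)|=|(\sigma\cdot M)^*(\underline{21};s)|$ for every multiset $M$, every $\sigma\in\fS_n$ and every $s\in\bN$. For a fixed $s$, this is precisely the statement that the corresponding property is stable.

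So the plan is to invoke Lemma~\ref{LEM:cons-length-2}, which gives the stability of $\underline{21}$, and then fix $s$ to obtain the claim.

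There is one point to check, since the proof of Lemma~\ref{LEM:cons-length-2} is written explicitly only for $\underline{12}$. I would either appeal to the reverse map $r$, which sends $M^*(\underline{12};s)$ bijectively onto $M^*(\underline{21};s)$ for each fixed $M$ and commutes with the action of $\fS_n$, or verify directly that $\Phi_i$ preserves descents. For the direct check, the argument runs as follows.
\begin{itemize}
\item Adjacent pairs whose two letters are not both in $\{i,i+1\}$ keep their relative order under $\tau_i$, and the block structure of the decomposition~\eqref{EQ:i-i+1-decom} is unchanged.
\item At a block boundary, the neighbouring letter compares the same way with either of $i$ or $i+1$, so its comparison with the end of the block is unaffected.
\item Within a block, reversing and complementing a word over $\{i,i+1\}$ maps its descents bijectively to descents.
\end{itemize}

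I do not expect any real obstacle. The only care needed is this check that $\underline{21}$, rather than $\underline{12}$, is the pattern being preserved, and the reversal symmetry handles it.
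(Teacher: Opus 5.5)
Your proposal is correct and takes the same route as the paper, which obtains the corollary directly from Lemma~\ref{LEM:cons-length-2} by identifying descents with occurrences of $\underline{21}$. Your added check that the reversal $r$ (or $\Phi_i$ itself) also handles $\underline{21}$, and not only $\underline{12}$, is sound and supplies a detail the paper leaves implicit.
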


\begin{rem}
	Note that $\Psi_{i}$ may fail to be a bijection between $M^*(\underline{12};s)$ and $M_i^*(\underline{12};s)$ for certain choices of $M$, $i$, and $s$. For example, for $M=M(2,1,1)$ and $i=1$, the permutation $\pi=1132\in M^*(\underline{12};1)$ while the permutation $\Psi_{1}(\pi)=1232\in M^*(\underline{12};2)$. In fact, this example also shows that $\Phi_{i}$ does not work to prove the stability of the pattern $12$, because $\pi=1132\in M^*(12;4)$ while $\Phi_{1}(\pi)=2231\in M^*(12;2)$.
\end{rem}

We now generalize Lemma~\ref{LEM:cons-length-2} to longer patterns. Given a multiset $M=M(k_1,k_2,\ldots,k_n)$ and an integer $i\in\{1,2,\ldots,n-1\}$, we define a mapping $\Theta_i$ for use in the subsequent proof. For a permutation $\pi\in M^*$ with decomposition as in~\eqref{EQ:i-i+1-decom}, we define $\Theta_{i}(\pi):=x_1Y_1x_2Y_2\cdots Y_{\rho-1}x_{\rho}$, where each $Y_j$ is obtained from $X_j$ by replacing the letter $i$ (resp., $i+1$) by $i+1$ (resp., $i$) in each position, except the following three cases:
\begin{enumerate}
	\item The length of $X_j$ is either two or at least four. Moreover, either its first two letters are distinct or its last two letters are distinct, and the letter is one of those two distinct letters.
	\item The length of $X_j$ is three. Moreover, either its first or last two letters are distinct and form an ascent in $X_j$, and the letter is one of those two distinct letters.
	\item The length of $X_j$ is three, and the letter is the first or last letter of $X_j=(i+1)(i+1)i$ or $X_j=(i+1)ii$.
\end{enumerate}
For convenience, we list all possible correspondences between $X_j$'s and $Y_j$'s for the length-$3$ $X_j$'s:
\begin{center}
	\begin{tabular}{|c|c|}
		\hline
		$X_j$           & $Y_j$           \\\hline
		$iii$             & $(i+1)(i+1)(i+1)$ \\
		$(i+1)(i+1)(i+1)$ & $iii$             \\
		$i(i+1)i$         & $i(i+1)(i+1)$     \\
		$(i+1)i(i+1)$     & $ii(i+1)$         \\
		$(i+1)(i+1)i$     & $(i+1)ii$         \\
		$ii(i+1)$         & $(i+1)i(i+1)$     \\
		$i(i+1)(i+1)$     & $i(i+1)i$     \\
		$(i+1)ii$         & $(i+1)(i+1)i$    \\\hline
	\end{tabular}
\end{center}
It is easy to see that $\Theta_{i}$ is a bijection between $M^*$ and $M_i^*$. We illustrate the mapping $\Theta_{i}$ more explicitly by an example.

\begin{exa}
	Let $M=M(2,4,2,1)$, $i=1$, and $\pi=321432212$ be as in Example~\ref{EX:classical-basic-bij}. As shown in Example~\ref{EX:-classical-12}, decomposing $\pi$ according to~\eqref{EQ:i-i+1-decom} yields $x_1=3$, $X_1=21$, $x_2=43$, $X_2=2212$ with empty $x_3$. To obtain $\Theta_{1}(\pi)$, we first write out $Y_j$'s: $Y_1=21$ and $Y_2=1112$. Therefore, we have $\Theta_{1}(\pi)=321431112\neq \Psi_1(\pi)$.
\end{exa}

\begin{thm}\label{THM:mono-cons-stable}
	The monotone consecutive patterns, i.e., the patterns $\underline{12\cdots\ell}$ and $\underline{\ell(\ell-1)\cdots1}$ for any positive integer $\ell$, are stable.
\end{thm}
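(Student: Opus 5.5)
We reduce first to the increasing patterns $\underline{12\cdots\ell}$. The reverse map $r$ is a bijection from $M^*$ to itself that sends occurrences of $\underline{12\cdots\ell}$ to occurrences of $\underline{\ell(\ell-1)\cdots1}$, so $|M^*(\underline{\ell\cdots1};s)|=|M^*(\underline{12\cdots\ell};s)|$ for every $M$ and $s$. Hence stability of one family implies stability of the other. The case $\ell=1$ is Fact~\ref{FACT:length1} and $\ell=2$ is Lemma~\ref{LEM:cons-length-2}, so we assume $\ell\ge3$. By Fact~\ref{FACT:transpositon} it suffices to show that, for each $i$, the map $\Theta_i$ defined above sends $M^*(\underline{12\cdots\ell};s)$ onto $M_i^*(\underline{12\cdots\ell};s)$. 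Bijectivity of $\Theta_i$ from $M^*$ to $M_i^*$ is already noted; it is checked block by block from the table and the description of cases (i)--(iii).

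The core step is to compare occurrences via maximal increasing runs. An occurrence of $\underline{12\cdots\ell}$ is a factor of length $\ell$ inside a maximal strictly increasing run, so $\underline{12\cdots\ell}(\pi)=\sum_R\max(|R|-\ell+1,0)$ over maximal ascending runs $R$. An increasing run contains at most one $i$ or $i+1$ from any block $X_j$, and if it contains both they are adjacent, with $i$ followed by $i+1$. So runs interact with a block $X_j$ only through its first two and last two letters. Entering a block, a run can use the first letter, and continue to the second only if the first two letters are $i(i+1)$. Symmetrically, leaving a block, it can use the last letter, or the last two if they are $i(i+1)$. Interior letters of a long block are isolated runs of length $1$ or $2$, and runs of length at most $2$ contribute nothing when $\ell\ge3$.

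We then check, for each shape of $X_j$, that $Y_j$ produces the same multiset of run lengths with the surrounding letters $x_j,x_{j+1}$. Outside letters compare with $i$ and $i+1$ identically, since no outside letter lies strictly between them. For blocks of length $2$ or length at least $4$, rule (i) keeps the distinct boundary pairs fixed. An ascent $i(i+1)$ at the front therefore stays an ascent; a descent $(i+1)i$ stays a descent. In that descent case, the run from the left ends at $i+1$ and a run starts at $i$, exactly as before. If the first two letters are equal, swapping them changes nothing relevant: the run from the left stops after one letter either way, and the left neighbor's relation to $i$ versus $i+1$ is the same. The same reasoning applies at the back, and the interior swaps do not matter. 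A length-$2$ block $(i+1)i$ or $i(i+1)$ is fixed; $ii$ becomes $(i+1)(i+1)$, which behaves the same.

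The main obstacle is length-$3$ blocks, where the front and back pairs overlap in the middle letter. There an ascent at one end must be preserved while the other end may change. We verify the eight rows of the table directly, checking that each $Y_j$ preserves the following data:
\begin{enumerate}
\item whether an incoming run extends by one or two letters;
\item whether an outgoing run starts with one or two letters;
\item the internal isolated runs.
\end{enumerate}
For example, $i(i+1)i$ gives an incoming extension of $2$ followed by a fresh run starting at the last letter. Its image $i(i+1)(i+1)$ gives the same: an extension of $2$, then a new run at the last $i+1$. Likewise $ii(i+1)$ gives an incoming extension of $1$ followed by an outgoing run starting with $i(i+1)$, and its image $(i+1)i(i+1)$ matches. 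The case $(i+1)(i+1)i$ mapping to $(i+1)ii$ is governed by rule (iii). Once these checks are done, the total $\sum_R\max(|R|-\ell+1,0)$ is preserved, which completes the proof.
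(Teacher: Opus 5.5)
Your proof is correct and is essentially the paper's argument. You use the same bijection $\Theta_i$, the same reductions (to $\underline{12\cdots\ell}$ with $\ell\ge3$ and to adjacent transpositions), and the same key observation: a strictly increasing factor of length at least three meets a maximal block of $i$'s and $(i+1)$'s either in a single letter, where $i$ and $i+1$ compare identically with outside letters, or in a boundary ascent $i(i+1)$ that $\Theta_i$ fixes. The only difference is bookkeeping: you count occurrences through maximal increasing runs and check that each block's first and last pairs keep their ascent status, whereas the paper directly compares the sets of starting positions of occurrences (those containing a factor $i(i+1)$ and those with at most one letter from $\{i,i+1\}$).
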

\begin{proof}
	It is sufficient to prove the stability of the pattern $p=\underline{12\cdots\ell}$ for $\ell>2$. Let $M$ be a multiset $M(k_1,k_2,\ldots,k_n)$ and $i\in\{1,2,\ldots,n-1\}$. Similarly to the proof of Theorem~\ref{THM:classical-length2-stable}, we only need to verify that $\Theta_{i}$ is a bijection between $M^*(p;s)$ and $M_i^*(p;s)$ for any $s\in\bN$.
	
	For any $\pi^{1}=\pi^{1}_1\pi^1_2\cdots \pi^1_m\in M^*$, assume that $\pi^2=\Theta_{i}(\pi)=\pi^2_1\pi^2_2\cdots \pi^2_m\in M_i^*$ and let $$S_k:=\{j\ |\ \pi^k_j\pi^k_{j+1}\cdots \pi^k_{j+\ell-1}\text{ is a strictly increasing factor of }\pi^k\}\text{ for }k=1,\,2.$$
	Furthermore, let $S_{k,\,1}:=\{j\in S_k\ |\ \pi^k_j\pi^k_{j+1}\cdots \pi^k_{j+\ell-1}\text{ contains a factor }i(i+1)\}$ and $S_{k,\,2}:=\{j\in S_k\ |\ \text{at most one letter in }\pi^k_j\pi^k_{j+1}\cdots \pi^k_{j+\ell-1}\text{ is }i \text{ or }i+1\}$ for $k=1,\,2$. Then $S_k$ is the disjoint union of $S_{k,\,1}$ and $S_{k,\,2}$ for $k=1,\,2$. We immediately have $S_{1,\,2}=S_{2,\,2}$. So it remains to show that $S_{1,\,1}=S_{2,\,1}$. 
	
	For any $j\in S_{1,\,1}$, denote the subsequence $\pi^1_j\pi^1_{j+1}\cdots \pi^1_{j+\ell-1}$ by $\pi'$. Writing $\pi^1$ in the form of Equation~\eqref{EQ:i-i+1-decom}, we note that there exists exactly one $X_{j'}$ such that $X_{j'}$ and $\pi'$ overlap. If $\pi'$ includes $X_{j'}$, then $X_{j'}$ must have length two. Otherwise, the overlapping part must be the first or the last two letters of $X_{j'}$ and these two letters will form an ascent. In either case, $\pi'$ is fixed under the mapping $\Theta_{i}$. Therefore, we have $j\in S_{2,\,1}$ and thus $S_{1,\,1}\subseteq S_{2,\,1}$. By similar arguments, we obtain $S_{2,\,1}\subseteq S_{1,\,1}$. This completes the proof.
\end{proof}

\begin{rem}
	The mapping $\Theta_{i}$ is not an appropriate bijection to prove Lemma~\ref{LEM:cons-length-2}. For instance, given $M=M(3,1,1)$ and $i=1$, the permutation $\pi=11213\in M^*(\underline{12};2)$ while the permutation $\Theta_{1}(\pi)=22213$ is in $M^*(\underline{12};1)$. Conversely, $\Phi_{i}$ also fails to prove the stability of the pattern $\underline{123}$, since $\pi'=12113\in M^*(\underline{123};0)$ while $\Phi_1(\pi')=22123\in M^*(\underline{123};1)$. 
	The bijection $\Gamma: M^* \to M^*_i$ of Han~\cite{Han1992}, originally introduced to prove that a statistic $\mathrm{Z}$ is Mahonian, may therefore be a better choice. For $\pi \in M^*$, the map $\Gamma_i(\pi)$ proceeds in three steps. First, replace each occurrence of $(i+1)i$ by a placeholder $\sim$. Second, every remaining factor using only $i$ and $i+1$ has the form
	$$\underbrace{ii\cdots i}_{j_1}\underbrace{(i+1)(i+1)\cdots (i+1)}_{j_2},$$
	and we replace it by
	$$\underbrace{ii\cdots i}_{j_2}\underbrace{(i+1)(i+1)\cdots (i+1)}_{j_1}.$$
	Third, restore each placeholder $\sim$ to $(i+1)i$. It is easy to see that this bijection preserves the joint distribution of all decreasing consecutive patterns, yielding another bijective proof of Theorem~\ref{THM:mono-cons-stable}.
\end{rem}

Next we construct a universal example demonstrating the instability of a broad class of non-monotone consecutive patterns. To this end, we first introduce some notions. Given a consecutive pattern $p=\underline{p_1p_2\cdots p_\ell}$, an index $i$, where $2\leq i\leq\ell$, is called \textit{extendable} if there exists a permutation $\pi$ of length $\ell+i-1$ such that the first and the last $\ell$ letters of $\pi$ each form an occurrence of the pattern $p$. Such a permutation is called an \textit{extended permutation} of the pattern $p$ at the index $i$. Among all such extended permutations $\pi=\pi_1\pi_2\cdots \pi_{\ell+i-1}$, the minimal one with respect to the lexicographic order is termed the \textit{standard extended permutation} of the pattern $p$ at the index $i$. The corresponding multiset of the standard extended permutation is the \textit{extended multiset} of the pattern $p$ at the index $i$. Note that the index $\ell$ is obviously extendable.

\begin{exa}\label{EX:extended-perm}
Let $p=\underline{24135}$. Then the index $4$ is extendable, and the standard extended permutation $\pi=24135146$ can be obtained as shown in Figure~\ref{fig:grid}. Similarly, the index $5$ is extendable, yielding the standard extended permutation $241357168$.
\end{exa}

\begin{figure}[htbp]
	\centering
	\begin{subfigure}[b]{0.45\textwidth}
		\centering
		\begin{tikzpicture}[scale=0.5]
		\draw[->, thick] (0,0) -- (9,0) node[right] {index};
		\draw[->, thick] (0,0) -- (0,7) node[above] {};
		
		\foreach \x in {0,...,8}
		\filldraw (\x,0) circle (1pt);
		
		
		\foreach \x in {1,...,8}
		\draw (\x,0) node[below] {$\pi_{\x}$};
		
		\filldraw (4,3) circle (2pt); 
		
		\filldraw (5,5) circle (2pt); 
		
		\end{tikzpicture}
		\caption{Step 1: draw the overlap points $\pi_4$ and $\pi_5$ in increasing order.}
		\label{fig:a}
	\end{subfigure}
	\hfill
	\begin{subfigure}[b]{0.45\textwidth}
		\centering
		\begin{tikzpicture}[scale=0.5]
		\draw[->, thick] (0,0) -- (9,0) node[right] {index};
		\draw[->, thick] (0,0) -- (0,7) node[above] {};
		
		\foreach \x in {0,...,8}
		\filldraw (\x,0) circle (1pt);
		
		
		\foreach \x in {1,...,8}
		\draw (\x,0) node[below] {$\pi_{\x}$};
		
		\filldraw (4,3) circle (2pt); 
		
		\filldraw (5,5) circle (2pt); 
		
		\filldraw (1,2) circle (2pt);
		
		\filldraw (2,4) circle (2pt);
		
		\filldraw (3,1) circle (2pt);
		
		\end{tikzpicture}
		\caption{Step 2: draw the first three points such that $\pi_1\pi_2\cdots\pi_5$ is an occurrence of $p$.}
		\label{fig:b}
	\end{subfigure}
	
	\vspace{0.5cm}
	
	\begin{subfigure}[b]{0.45\textwidth}
		\centering
		\begin{tikzpicture}[scale=0.5]
		\draw[->, thick] (0,0) -- (9,0) node[right] {index};
		\draw[->, thick] (0,0) -- (0,7) node[above] {};
		
		\foreach \x in {0,...,8}
		\filldraw (\x,0) circle (1pt);
		
		
		\foreach \x in {1,...,8}
		\draw (\x,0) node[below] {$\pi_{\x}$};
		
		\filldraw (4,3) circle (2pt); 
		
		\filldraw (5,5) circle (2pt); 
		
		\filldraw (1,2) circle (2pt);
		
		\filldraw (2,4) circle (2pt);
		
		\filldraw (3,1) circle (2pt);
		
		\filldraw (6,1) circle (2pt);
		
		\filldraw (7,4) circle (2pt);
		
		\filldraw (8,6) circle (2pt);
		
		\end{tikzpicture}
		\caption{Step 3: draw the last three points such that $\pi_4\pi_5\cdots\pi_8$ is an occurrence of $p$.}
		\label{fig:c}
	\end{subfigure}
	\hfill
	\begin{subfigure}[b]{0.45\textwidth}
		\centering
		\begin{tikzpicture}[scale=0.5]
		\draw[->, thick] (0,0) -- (9,0) node[right] {index};
		\draw[->, thick] (0,0) -- (0,7) node[above] {letter};
		
		\foreach \x in {0,...,8}
		\filldraw (\x,0) circle (1pt);
		
		\foreach \y in {0,...,6}
		\filldraw (0,\y) circle (1pt);
		
		\foreach \x in {1,...,8}
		\draw (\x,0) node[below] {$\pi_{\x}$};
		
		\foreach \y in {1,...,6}
		\draw (0,\y) node[left] {$\y$};
		
		\filldraw (4,3) circle (2pt); 
		
		\filldraw (5,5) circle (2pt); 
		
		\filldraw (1,2) circle (2pt);
		
		\filldraw (2,4) circle (2pt);
		
		\filldraw (3,1) circle (2pt);
		
		\filldraw (6,1) circle (2pt);
		
		\filldraw (7,4) circle (2pt);
		
		\filldraw (8,6) circle (2pt);
		
		\end{tikzpicture}
		\caption{Step 4: mark the dots upwards to obtain the standard extended permutation $\pi=24135146$.}
		\label{fig:d}
	\end{subfigure}
	\caption{Steps for obtaining the standard extended permutation $\pi$ of $p=\underline{24135}$ at the index $4$.}
	\label{fig:grid}
\end{figure}
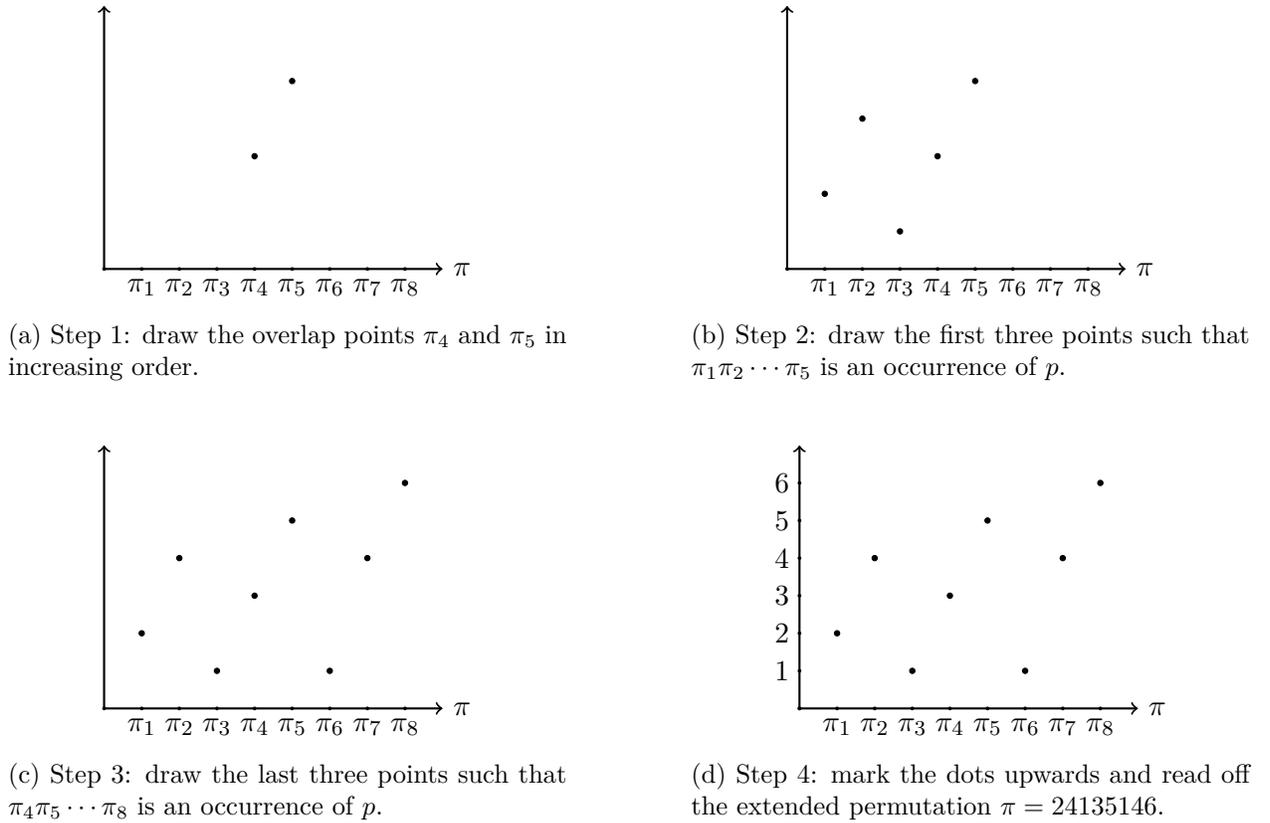

The method to obtain the standard extended permutation in Example~\ref{EX:extended-perm} reveals important counting properties of the extended permutations.

\begin{lem}\label{LEM:ext-like-perm}
	Let $p=\underline{p_1p_2\cdots p_\ell}$ be a consecutive pattern with an extendable index $i$. Let $\sigma\in \fS_{\ell-i+1}$ be a permutation such that $p_{\sigma(1)}<p_{\sigma(2)}<\cdots<p_{\sigma(\ell-i+1)}$. With the convention $p_{\sigma(0)}=p_{\sigma(0)+i-1}=0$ and $p_{\sigma(\ell-i+2)}=p_{\sigma(\ell-i+2)+i-1}=\ell+1$, set 
	$$\delta_j^1:=p_{\sigma(j)}-p_{\sigma(j-1)}-1\quad\text{ and}\quad\delta_j^2:=p_{\sigma(j)+i-1}-p_{\sigma(j-1)+i-1}-1$$
	for $j=1,2,\ldots,\ell-i+2$.
	If $\pi=\pi_1\pi_2\cdots\pi_{\ell+i-1}$ is an extended permutation of $p$ at the index $i$, then \begin{align}\label{EQ:pi-order}
	\pi_{\sigma(1)+i-1}<\pi_{\sigma(2)+i-1}<\cdots<\pi_{\sigma(\ell-i+1)+i-1}.
	\end{align}
	Furthermore, with the convention $\pi_{\sigma(0)+i-1}=0$ and $\pi_{\sigma(\ell-i+2)+i-1}=\infty$, for each $j=1,2,\ldots,\ell-i+2$, there are exactly $\delta_j^1+\delta_j^2$ occurrences of letters in $\pi$ lying strictly between $\pi_{\sigma(j-1)+i-1}$ and $\pi_{\sigma(j)+i-1}$, and at least $\max\{\delta_j^1,\delta_j^2\}$ of them are distinct letters.
\end{lem}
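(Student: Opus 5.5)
The plan is to read off everything from the two overlapping occurrences of $p$ inside $\pi$. Write $W_1=\pi_1\cdots\pi_\ell$ and $W_2=\pi_i\cdots\pi_{\ell+i-1}$ for the first and last $\ell$ letters. They overlap exactly in the positions $i,\ldots,\ell$, i.e.\ in the positions $t+i-1$ for $t=1,\ldots,\ell-i+1$. In $W_2$ the letter $\pi_{t+i-1}$ plays the role of $p_t$. In $W_1$ the same letter plays the role of $p_{t+i-1}$. Since $p$ has distinct letters, each window consists of $\ell$ distinct letters, and a window is an occurrence of $p$ exactly when it is order-isomorphic to $p$.

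First I prove \eqref{EQ:pi-order}. In $W_2$, order-isomorphism with $p$ gives $\pi_{t+i-1}<\pi_{t'+i-1}$ if and only if $p_t<p_{t'}$. Since $p_{\sigma(1)}<\cdots<p_{\sigma(\ell-i+1)}$, this yields \eqref{EQ:pi-order} directly. Applying the same order-isomorphism to $W_1$ gives $p_{\sigma(1)+i-1}<\cdots<p_{\sigma(\ell-i+1)+i-1}$. This observation is the one that really uses extendability, and it is needed below so that the $\delta_j^2$ are non-negative and describe consecutive gaps.

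Next I count letters in a fixed value gap, for $j\in\{1,\ldots,\ell-i+2\}$, namely the letters strictly between $\pi_{\sigma(j-1)+i-1}$ and $\pi_{\sigma(j)+i-1}$ (with the conventions $0$ and $\infty$). The non-overlap positions split into the head $1,\ldots,i-1$, which lies only in $W_1$, and the tail $\ell+1,\ldots,\ell+i-1$, which lies only in $W_2$.
\begin{itemize}
\item No overlap letter lies in the gap. The overlap letters are distinct and totally ordered by \eqref{EQ:pi-order}, and the two endpoints are consecutive in that order.
\item Head positions. Via $W_1$, a head letter lies in the gap exactly when its $p$-value in $W_1$ lies strictly between $p_{\sigma(j-1)+i-1}$ and $p_{\sigma(j)+i-1}$. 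Because these two values are consecutive among the overlap values of $W_1$, every value strictly between them belongs to a head position. Hence exactly $\delta_j^2$ head letters lie in the gap.
\item Tail positions. Symmetrically, via $W_2$ and the values $p_{\sigma(j-1)},p_{\sigma(j)}$, exactly $\delta_j^1$ tail letters lie in the gap.
\end{itemize}
Since the head and tail positions are disjoint, the gap contains exactly $\delta_j^1+\delta_j^2$ occurrences of letters.

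The final claim is then immediate. The $\delta_j^2$ head letters in the gap all lie in $W_1$, so they are pairwise distinct, and likewise the $\delta_j^1$ tail letters are distinct. This gives at least $\max\{\delta_j^1,\delta_j^2\}$ distinct letters; equality with a head letter across the two groups is allowed, which is why only the maximum is guaranteed.

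The main obstacle I expect is bookkeeping at the boundary gaps $j=1$ and $j=\ell-i+2$. There I must check that the conventions $p_{\sigma(0)}=p_{\sigma(0)+i-1}=0$ and $p_{\sigma(\ell-i+2)}=p_{\sigma(\ell-i+2)+i-1}=\ell+1$ make the formulas for $\delta_j^1,\delta_j^2$ count exactly the window values below the smallest, respectively above the largest, overlap value. I also need to keep straight that $\delta^1$ pairs with the tail and $\delta^2$ with the head, because of the index shift by $i-1$.
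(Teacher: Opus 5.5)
Your proposal is correct and follows essentially the same argument as the paper. You order the overlap letters using the two windows, then split each gap count into $\delta_j^2$ head letters (read from the first window) and $\delta_j^1$ tail letters (read from the last window), each group pairwise distinct. The only difference is the order of deduction: you get \eqref{EQ:pi-order} directly from the last window and then the ordering of the $p_{\sigma(j)+i-1}$ from the first window, whereas the paper starts from the prefix and suffix of $p$ having the same reduction.
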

\begin{proof}
	Extendability of the index $i$ implies that the suffix
	$p_ip_{i+1}\cdots p_\ell$ and the prefix $p_1p_2\cdots p_{\ell-i+1}$ have the same reduction.
	This means that $p_{\sigma(1)+i-1}<p_{\sigma(2)+i-1}<\cdots<p_{\sigma(\ell-i+1)+i-1}$ and in particular $\delta_j^2\geq0$ for all $j$. Therefore, $\pi_i,\pi_{i+1},\ldots,\pi_\ell$ are ordered as~\eqref{EQ:pi-order}. With $\pi_i,\pi_{i+1},\ldots,\pi_\ell$ fixed, we consider the first and last $i-1$ letters of $\pi$. Since the first $\ell$ letters of $\pi$ constitute an occurrence of $p$ which has distinct letters, among the first $i-1$ letters, there are exactly $\delta_1^2$ distinct letters smaller than $\pi_{\sigma(1)+i-1}$, $\delta_{\ell-i+2}^2$ distinct letters larger than $\pi_{\sigma(\ell-i+1)+i-1}$, and $\delta_j^2$ distinct letters larger than $\pi_{\sigma(j-1)+i-1}$ and smaller than $\pi_{\sigma(j)+i-1}$ for $j=2,3,\ldots,\ell-i+1$. Considering the last $i-1$ letters of $\pi$, the corresponding enumeration can be obtained by replacing $\delta_j^2$ with $\delta_j^1$ for $j=1,2,\ldots,\ell-i+2$. Combining the counts from the first and last blocks yields the desired enumeration.
\end{proof}

Lemma~\ref{LEM:ext-like-perm} immediately yields a characterization of the extended multiset, since the standard extended permutation is the minimal among all extended permutations.

\begin{prop}\label{PROP:extended-mset-0}
	Let $p=\underline{p_1p_2\cdots p_\ell}$ be a consecutive pattern with an extendable index $i$. Using the notations from Lemma~\ref{LEM:ext-like-perm}, define
	$$\Delta_j^1:=\min\{\delta_j^1,\delta_j^2\}\quad\text{and}\quad\Delta_j^2:=|\delta_j^1-\delta_j^2|$$
	for $j=1,2,\ldots,\ell-i+2$. Then the extended multiset of the pattern $p$ at the index $i$ is \begin{align}\label{EQ:ext-mul}
	M(\underbrace{2,\ldots,2}_{\Delta_1^1},\underbrace{1,\ldots,1}_{\Delta_1^2+1},\underbrace{2,\ldots,2}_{\Delta_2^1},\underbrace{1,\ldots,1}_{\Delta_2^2+1},\ldots,\underbrace{2,\ldots,2}_{\Delta_{\ell-i+1}^1},\underbrace{1,\ldots,1}_{\Delta_{\ell-i+1}^2+1},\underbrace{2,\ldots,2}_{\Delta_{\ell-i+2}^1},\underbrace{1,\ldots,1}_{\Delta_{\ell-i+2}^2}).
	\end{align}
\end{prop}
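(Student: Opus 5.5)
The plan is to determine the lexicographically minimal extended permutation directly from the counting data in Lemma~\ref{LEM:ext-like-perm}. Then we read off its multiset.

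First, by Lemma~\ref{LEM:ext-like-perm}, any extended permutation $\pi$ at index $i$ has its middle letters $\pi_i,\ldots,\pi_\ell$ ordered as in~\eqref{EQ:pi-order}. These $\ell-i+1$ middle letters split the value range into $\ell-i+2$ gaps. Gap $j$ contains exactly $\delta_j^1+\delta_j^2$ letters, namely $\delta_j^2$ distinct letters from the first block $\pi_1\cdots\pi_{i-1}$ and $\delta_j^1$ distinct letters from the last block $\pi_{\ell+1}\cdots\pi_{\ell+i-1}$.

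The relative order of letters within one block is forced by the pattern. Across the two blocks, however, letters may coincide, since $\pi$ is a word on a multiset. The key observation is that inside gap $j$, each letter of the first block can share its value with at most one letter of the last block. Hence the number of distinct values in gap $j$ is at least $\max\{\delta_j^1,\delta_j^2\}$. This bound is attained by identifying $\Delta_j^1=\min\{\delta_j^1,\delta_j^2\}$ pairs, which gives $\Delta_j^1$ values of multiplicity $2$ and $\Delta_j^2=|\delta_j^1-\delta_j^2|$ values of multiplicity $1$. Such an identification is always consistent, because the two blocks occupy disjoint positions and are each internally ordered; we simply pair the $t$-th smallest letter of one block with the $t$-th smallest of the other inside the gap. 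The middle letters each contribute a value of multiplicity $1$.

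Next we show that the lexicographically minimal permutation realizes this maximal merging in every gap, with the doubled values placed at the bottom of each gap. Minimality requires making $\pi_1$ as small as possible, then $\pi_2$, and so on. Values are reduced to $1,2,\ldots$, so using fewer distinct values below a given letter makes that letter smaller. Inside each gap, the values taken by both blocks should be the smallest ones: the first block appears first in $\pi$, and we want its letters small. Placing the shared values lowest lets the first-block letters take the minimal possible ranks, while the excess $\Delta_j^2$ singletons from whichever block has more letters sit above them. Listing the gaps bottom to top, each gap contributes $\Delta_j^1$ entries equal to $2$ followed by $\Delta_j^2$ entries equal to $1$, and each middle letter contributes a further $1$ between consecutive gaps. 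This is exactly the sequence of multiplicities in~\eqref{EQ:ext-mul}, where the middle letter is absorbed as the ``$+1$'' in each of the first $\ell-i+1$ groups of $1$'s.

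I expect the main obstacle to be the second step, rigorously justifying lexicographic minimality. We must show that any non-maximal merging, or any arrangement with a singleton below a shared value inside a gap, yields a lexicographically larger word. I would compare the first position where two candidate words differ: it lies in the first block or at a middle letter. There, the count of distinct values below that letter is strictly larger in the suboptimal arrangement, using the forced intra-block orders. Some care is needed when $\delta_j^2>\delta_j^1$. In that case the first block's excess letters in gap $j$ are singletons, and we must check that placing them above the shared values does not increase an earlier first-block letter. Choosing the pairing that matches the first block's smallest letters with the last block's smallest letters handles this.
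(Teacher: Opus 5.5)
Your strategy is the paper's. Lemma~\ref{LEM:ext-like-perm} forces at least $\max\{\delta_j^1,\delta_j^2\}=\Delta_j^1+\Delta_j^2$ distinct values in gap $j$, and the standard extended permutation fills each gap from the bottom. The paper just writes this filling down, with $\pi_{\sigma(j)+i-1}=j+\sum_{\mu\le j}(\Delta_\mu^1+\Delta_\mu^2)$, and says little about minimality.

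\textbf{The gap is in your minimality step.} You claim the first position where a suboptimal candidate differs from the claimed minimizer lies in the first block or at a middle letter. You also justify putting the doubled values at the bottom by ``the first block appears first and we want its letters small.''
\begin{itemize}
\item That works when $\delta_j^1>\delta_j^2$: a last-block singleton below a shared value really does raise a first-block letter.
\item It fails when $\delta_j^2>\delta_j^1$. The first block occupies all $\delta_j^2$ values of the gap whichever of them the last block reuses. So $\pi_1\cdots\pi_\ell$ is identical for bottom-sharing and top-sharing.
\item It also fails in the top gap $j=\ell-i+2$. Not merging there, by putting last-block singletons above the first-block letters, changes no first-block or middle letter.
\end{itemize}
For example, take $p=\underline{24135}$ and $i=4$. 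The extended permutations $24135146$ and $24135246$ agree in positions $1$--$5$ and first differ at position $6$, which is in the last block. Their multisets are $M(2,1,1,2,1,1)$ and $M(1,2,1,2,1,1)$. So your argument as planned cannot decide between the orders $2,1$ and $1,2$ inside such a gap, and that order is exactly what~\eqref{EQ:ext-mul} asserts. Your worry about an earlier first-block letter increasing is a non-issue, since those letters are unaffected either way. ``Choosing the pairing that matches smallest with smallest'' is the right conclusion, but you give no reason for it.

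\textbf{The fix is a third stage that works in the last block.} State everything as coordinatewise lower bounds.
\begin{itemize}
\item The $t$-th smallest first-block letter in gap $j$ is at least $\sum_{\mu<j}(\Delta_\mu^1+\Delta_\mu^2)+(j-1)+t$.
\item The middle letter $\pi_{\sigma(j)+i-1}$ is at least $j+\sum_{\mu\le j}(\Delta_\mu^1+\Delta_\mu^2)$.
\item The $t$-th smallest last-block letter in gap $j$ is at least $\pi_{\sigma(j-1)+i-1}+t$, with $\pi_{\sigma(0)+i-1}=0$, because the $t-1$ smaller last-block letters in that gap are distinct.
\end{itemize}
The first two families hold for every extended permutation. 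The third holds in terms of the permutation's own middle letters, and these agree with the candidate's whenever the first difference falls in the last block.

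The bottom-doubled word attains all of these bounds at once. So at the first differing position, any other extended permutation has a strictly larger letter, and the bottom-doubled word is the lexicographic minimum. In the gaps with $\delta_j^2>\delta_j^1$ and in the top gap, it is the last-block bound, not the first-block one, that pins the doubled values to the bottom.
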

\begin{proof}
	On the one hand, Lemma~\ref{LEM:ext-like-perm} shows that the multiset $M$ given by~\eqref{EQ:ext-mul} is the minimal possible one. On the other hand, one can construct the standard extended permutation $\pi$ in $M^*$ as follows. First, write the letters in the overlap part by setting $\pi_{\sigma(j)+i-1}=j+\sum_{\mu=1}^{j}(\Delta_\mu^1+\Delta_\mu^2)$ for $j=1,2,\ldots,\ell-i+1$. Next, for the first $i-1$ indices of $\pi$, first consider those whose letters should be smaller than $\pi_{\sigma(1)+i-1}$, and fill them with the letters $1,2,\ldots,\delta_1^2$ in the order induced by the occurrence of $p$; then similarly fill $\delta_2^2$ indices whose letters should be smaller than $\pi_{\sigma(2)+i-1}$ and larger than $\pi_{\sigma(1)+i-1}$; and so on. The filling of the last $i-1$ letters of $\pi$ is analogous.
\end{proof}

In particular, when the pattern $p$ satisfies $p_1<p_\ell$ and we take the extendable index $\ell$, the corresponding extended multiset has a much simpler structure.

\begin{cor}\label{COR:extended-mset}
	Let $p=\underline{p_1p_2\cdots p_\ell}$ be a consecutive pattern. If $p_1<p_\ell$ (resp., $p_1>p_\ell$), then the standard extended permutation of the pattern $p$ at the index $\ell$ begins (resp., ends) with $p$ and the corresponding extended multiset is $M(k_1,k_2,\ldots,k_{\ell+|p_\ell-p_1|})$, where
	\begin{align*}
	k_i=\left\{
	\begin{array}{ll}
	2,& \text{if }1\leq i \leq \min\{p_1,p_\ell\}-1 \text{ or } \max\{p_1,p_\ell\}+1\leq i\leq\ell;\\
	1, &\text{otherwise.}
	\end{array}
	\right.
	\end{align*}
\end{cor}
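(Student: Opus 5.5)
We specialize Proposition~\ref{PROP:extended-mset-0} to the index $i=\ell$, which is always extendable. By reversal symmetry it suffices to treat $p_1<p_\ell$. Reversing $p$ turns an extended permutation at index $\ell$ of $p$ into one of $r(p)$, and it reverses the sequence of multiplicities appropriately. Alternatively, the case $p_1>p_\ell$ can be handled by the same computation with the roles of the first and last blocks swapped.

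For $i=\ell$ we have $\ell-i+1=1$, so the overlap consists of the single position $\ell$. Here $\sigma$ is the identity of $\fS_1$, and the overlap letter is $\pi_\ell$, playing the role of $p_\ell$ in the first block and of $p_1$ in the last block. We now compute the $\delta$'s with the stated conventions. For $j=1$ we get $\delta_1^1=p_{\sigma(1)}-0-1=p_1-1$ and $\delta_1^2=p_{\sigma(1)+\ell-1}-1=p_\ell-1$. For $j=2$ we get $\delta_2^1=\ell-p_1$ and $\delta_2^2=\ell-p_\ell$. When $p_1<p_\ell$, this gives $\Delta_1^1=p_1-1$, $\Delta_1^2=p_\ell-p_1$, $\Delta_2^1=\ell-p_\ell$ and $\Delta_2^2=p_\ell-p_1$.

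Plugging these into \eqref{EQ:ext-mul} gives the following multiplicities in order:
\begin{itemize}
\item $p_1-1$ twos;
\item $p_\ell-p_1+1$ ones;
\item $\ell-p_\ell$ twos;
\item $p_\ell-p_1$ ones.
\end{itemize}
This totals $\ell+(p_\ell-p_1)$ letters, each with multiplicity $2$ or $1$. The first block of twos occupies indices $1,\dots,p_1-1$, which is $i\le\min\{p_1,p_\ell\}-1$. The ones occupy indices $p_1,\dots,p_\ell$. The next block of twos then occupies indices $p_\ell+1,\dots,\ell$, which is $\max\{p_1,p_\ell\}+1\le i\le \ell$. The remaining indices $\ell+1,\dots,\ell+p_\ell-p_1$ carry ones. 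This matches the claimed $k_i$.

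For the claim that the standard extended permutation begins with $p$, we follow the construction in the proof of Proposition~\ref{PROP:extended-mset-0}. The overlap letter is $\pi_\ell=1+\Delta_1^1+\Delta_1^2=p_\ell$. The first $\ell-1$ positions must be filled with the $\delta_1^2=p_\ell-1$ letters below it in the order induced by $p$, using $1,\dots,p_\ell-1$, and the $\delta_2^2=\ell-p_\ell$ letters above it, using $p_\ell+1,\dots,\ell$. Hence the first $\ell$ letters are exactly $p$. The lexicographically minimal choice is forced to put the smallest available letters in the prefix, so the prefix is $p$ itself.

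The main point needing care is this last minimality check. We must verify that choosing the letters $1,\dots,\ell$ for the prefix is compatible with the lexicographic minimality of the whole permutation. This holds because the prefix positions come first, and their relative order is fixed by $p$, so using the smallest letters there is optimal. For the case $p_1>p_\ell$, we apply the argument to $r(p)$: reversing the standard extended permutation of $r(p)$ at index $\ell$ shows that the standard extended permutation of $p$ ends with $p$, and the multiset formula is the same expression with $\min$ and $\max$ interchanged appropriately.
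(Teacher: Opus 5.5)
Your multiset computation is correct, and it is the paper's route: specialize Proposition~\ref{PROP:extended-mset-0} to $i=\ell$. It needs no case split, since in general $\Delta_1^1=\min\{p_1,p_\ell\}-1$, $\Delta_1^2=\Delta_2^2=|p_\ell-p_1|$ and $\Delta_2^1=\ell-\max\{p_1,p_\ell\}$. So \eqref{EQ:ext-mul} gives the stated $k_i$ in both cases. Your argument that the standard extended permutation begins with $p$ when $p_1<p_\ell$ is also sound. Any prefix order-isomorphic to $p$ is entrywise at least $p$, and an extended permutation with prefix exactly $p$ exists.

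The gap is the case $p_1>p_\ell$. You deduce ``ends with $p$'' by reversing the standard extended permutation of $r(p)$, but lexicographic minimality is not preserved under reversal. The reverse of the lex-minimal extended permutation of $r(p)$ is the extended permutation of $p$ that is minimal when words are compared from the right, and in general that is not the lex-minimal one. The conclusion happens to hold here only because the extended permutations at index $\ell$ have an entrywise minimum, which you never establish. Also, reversing words does not ``reverse the sequence of multiplicities''; it leaves the multiset unchanged.

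You can close the gap directly.
\begin{enumerate}
\item In any extended permutation, $\pi_\ell\ge p_1$, since $p_1$ is its rank in the last occurrence.
\item Hence $\pi_{\ell+j-1}\ge p_j$ for $2\le j\le\ell$.
\item For $j<\ell$, the first occurrence gives $\pi_j\ge p_j$ if $p_j<p_\ell$, and $\pi_j\ge\pi_\ell+p_j-p_\ell\ge p_1+p_j-p_\ell$ if $p_j>p_\ell$.
\end{enumerate}
The word attaining all these bounds is itself an extended permutation, and its last $\ell$ letters are exactly $p$. Being entrywise minimal, it is the lexicographic minimum.

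Equivalently, read it off the construction in the proof of Proposition~\ref{PROP:extended-mset-0}. There $\pi_\ell=1+\Delta_1^1+\Delta_1^2=\max\{p_1,p_\ell\}=p_1$, and the last block is filled with $1,\ldots,p_1-1$ and $p_1+1,\ldots,\ell$ in the order induced by $p$. This treats both cases uniformly, with no reversal.
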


\begin{exa}\label{EX:extended-mset}
	We illustrate Lemma~\ref{LEM:ext-like-perm}, Proposition~\ref{PROP:extended-mset-0} and Corollary~\ref{COR:extended-mset} with some examples.
	\begin{enumerate}
		\item Let $p=\underline{24135}$. As Example~\ref{EX:extended-perm} shows, the index $4$ is extendable with the standard extended permutation $\pi=24135146$. In fact, Figure~\ref{fig:grid} shows that the points $\pi_4$ and $\pi_5$ divide the diagram into nine regions; as stated in Lemma~\ref{LEM:ext-like-perm}, the remaining points can only lie in six of them, with $\delta_j^i$ counting the number of points in each region, see Figure~\ref{fig:2}. This leads to
		\begin{gather*}
		\Delta_1^1=\min\{1,2\}=1,\,\Delta_2^1=\min\{1,1\}=1,\,\Delta_3^1=\min\{1,0\}=0,\\
		\Delta_1^2=1,\,\Delta_2^2=0,\,\Delta_3^2=1,
		\end{gather*}
		and the corresponding extended multiset $M(2,1,1,2,1,1)$, as shown in Proposition~\ref{PROP:extended-mset-0}.
		\item We have found that the standard extended permutation of the pattern $\underline{24135}$ at the index $5$ is $241357168$, which is consistent with Corollary~\ref{COR:extended-mset}. 
	\end{enumerate}
\end{exa}

\begin{figure}[htbp]
	\centering
	\begin{tikzpicture}[scale=0.8]
	
	\draw[dashed, gray!50, thin] (0,3) -- (9,3); 
	\draw[dashed, gray!50, thin] (0,5) -- (9,5);
	\draw[dashed, gray!50, thin] (4,0) -- (4,7);
	\draw[dashed, gray!50, thin] (5,0) -- (5,7);
	
	\draw[->, thick] (0,0) -- (9,0) node[right] {index};
	\draw[->, thick] (0,0) -- (0,7) node[above] {letter};
	
	\foreach \x in {0,...,8}
	\filldraw (\x,0) circle (1pt);
	
	\foreach \y in {0,...,6}
	\filldraw (0,\y) circle (1pt);
	
	\foreach \x in {1,...,8}
	\draw (\x,0) node[below] {$\pi_{\x}$};
	
	\foreach \y in {1,...,6}
	\draw (0,\y) node[left] {$\y$};
	
	\draw (3,6) node {$\delta_3^2=0$};
	\draw (3,4) node {$\delta_2^2=1$};
	\draw (3,1.5) node {$\delta_1^2=2$};
	\draw (6,6) node {$\delta_3^1=1$};
	\draw (6,4) node {$\delta_2^1=1$};
	\draw (6,1.5) node {$\delta_1^1=1$};
	
	\filldraw (4,3) circle (2pt); 
	
	\filldraw (5,5) circle (2pt); 
	
	\filldraw (1,2) circle (2pt);
	
	\filldraw (2,4) circle (2pt);
	
	\filldraw (3,1) circle (2pt);
	
	\filldraw (6,1) circle (2pt);
	
	\filldraw (7,4) circle (2pt);
	
	\filldraw (8,6) circle (2pt);
	\end{tikzpicture}
	\caption{The number $\delta_j^i$ denotes the number of points in each region.}
	\label{fig:2}
\end{figure}
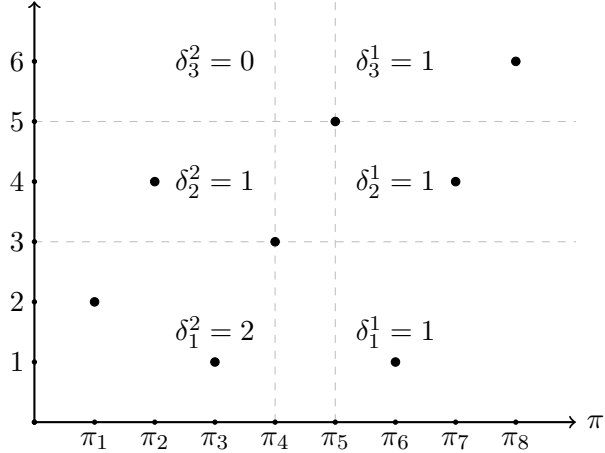

The extended multiset presents a potential example to show that the pattern $p$ is unstable.

\begin{prop}\label{PROP:unstable-by-extended}
	Let $p=\underline{p_1p_2\cdots p_\ell}$ be a consecutive pattern with the minimal extendable index~$i$. If the standard extended permutation of the pattern $p$ at the index $i$ contains repeated letters, then the pattern $p$ is unstable.
\end{prop}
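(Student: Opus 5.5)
The plan is to exhibit an explicit multiset on which $p$ is unstable. The natural choice is the extended multiset $M$ of Proposition~\ref{PROP:extended-mset-0}, compared with a transposed copy obtained by swapping one multiplicity $2$ with a neighbouring multiplicity $1$. We then compare the counts $|M^*(p;2)|$ and $|(\sigma\cdot M)^*(p;2)|$. The key observation is that, since $i$ is the \emph{minimal} extendable index, two occurrences of $p$ in a word can never start at positions closer than $i-1$ apart. On a multiset of size $|M|=\ell+i-1$, two occurrences must therefore sit exactly at the first and last $\ell$ letters. So $M^*(p;2)$ consists exactly of the extended permutations of $p$ at index $i$ that are permutations of $M$.

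We count these via Lemma~\ref{LEM:ext-like-perm}. The overlap letters are ordered as in~\eqref{EQ:pi-order}. Between consecutive overlap values, the $\delta_j^1+\delta_j^2$ remaining letters split into a left block and a right block, each of whose internal orders is forced by $p$. By minimality of $M$ (Proposition~\ref{PROP:extended-mset-0}), region $j$ uses exactly $\Delta_j^1$ doubled values and $\Delta_j^2$ single values. The only freedom is how the left block of size $\delta_j^2$ and the right block of size $\delta_j^1$ interleave in value. Each block is strictly increasing in value, and a shared value must be used once on each side. Hence $|M^*(p;2)|$ is the number of weakly increasing merges of two chains in which exactly $\Delta_j^1$ ties occur, multiplied over $j$. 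This is a product of binomial-type factors; in particular it is at least $1$, because the standard extended permutation exists.

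For the comparison multiset we use the hypothesis that the standard extended permutation has a repeated letter. Then some $\Delta_j^1\ge 1$, so $M$ contains a block of $2$'s followed by the block of $1$'s of length $\Delta_j^2+1\ge1$; this $1$-block includes the overlap value $\pi_{\sigma(j)+i-1}$. Let $M'$ swap the multiplicity of the last doubled value $a$ in region $j$ with that of the next value $a+1$. Then $|M'|=|M|$, so again $(M')^*(p;2)$ consists of extended permutations. Now consider the case where the next value is the overlap letter $\pi_{\sigma(j)+i-1}$ (when $\Delta_j^2=0$). That letter would need multiplicity $2$, while the value below it has multiplicity $1$. We then argue that a repeated overlap value would need a second copy in the first or last $i-1$ letters lying strictly between adjacent order constraints. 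This is impossible for an occurrence of a pattern with distinct letters, since the copy would equal a letter in the same $\ell$-window. Hence $(M')^*(p;2)=\emptyset\ne M^*(p;2)$. If $\Delta_j^2\ge1$, the value $a+1$ is a non-overlap single value in region $j$. We choose instead to swap so that a doubled value moves to an overlap position, for instance the overlap value ending region $j$ with the nearest doubled value. That gives the same contradiction.

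The main obstacle is this last step. One must check carefully that after the swap the multiset is a genuine permutation of $M$, i.e.\ differs by a transposition of two multiplicities, and that a doubled overlap letter truly forbids two occurrences. The total count of letters strictly between overlap values in region $j$ must also exceed what the forced pattern allows, which is precisely the bound in Lemma~\ref{LEM:ext-like-perm}. If swapping onto an overlap value turns out to be inconvenient, the fallback is to compare the product formula for $|M^*(p;2)|$ against the count for $M'$ obtained by the same merge analysis, where one tie count changes. The difference should be a strict inequality of binomial coefficients.
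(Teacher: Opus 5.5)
\textbf{Assessment.} The proposal has a genuine but easily fixable gap: it does not cover the case where every doubled value lies above the largest overlap letter.

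\textbf{What works, and how it differs from the paper.} Your framework matches the paper's: compare the extended multiset $M$, where $|M^*(p;2)|\ge 1$, with a copy in which a multiplicity $2$ and a multiplicity $1$ are transposed, and show the copy has no permutation with exactly two occurrences. Your contradiction mechanism is different. The paper swaps a doubled value with an extreme single value ($k_{\ell'}$ or $k_1$) and counts letters below and above one overlap letter. You instead move the doubling onto an overlap value, and use that an overlap letter lies in both $\ell$-windows and so can never be repeated. To complete this you must make explicit, via Lemma~\ref{LEM:ext-like-perm}, that the $j$-th smallest overlap letter of any extended permutation has exactly $N_j=(j-1)+\sum_{j'\le j}(\delta^1_{j'}+\delta^2_{j'})$ letters strictly below it. With that count in hand, the argument is valid. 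Swapping the multiplicity of the overlap value $v_j$ with that of any doubled value $b<v_j$ leaves no value with exactly $N_j$ letters below it. This handles every case with some $\Delta^1_j>0$, $j\le \ell-i+1$, uniformly, without the paper's split into Lemmas~\ref{LEM:unstable-by-extended-1} and~\ref{LEM:unstable-by-extended-2}.

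\textbf{The gap.} Your construction misses the case where the only positive $\Delta^1_j$ is $\Delta^1_{\ell-i+2}$, i.e.\ all doubled values lie above the largest overlap letter. This is exactly the case $j_0=\ell-i+2$ that the paper treats separately.
\begin{itemize}
\item Your claim that the $1$-block after the doubled block ``includes the overlap value $\pi_{\sigma(j)+i-1}$'' is false for $j=\ell-i+2$. That last block has length $\Delta^2_{\ell-i+2}$, not $\Delta^2_{\ell-i+2}+1$, and no overlap value ends region $\ell-i+2$. So the swap you prescribe is undefined there.
\item The case really occurs. For $p=\underline{132}$ the minimal extendable index is $i=3$, the standard extended permutation is $13243$, and $M=M(1,1,2,1)$ with only $\Delta^1_2>0$.
\item Your fallback does not rescue it. The adjacent swap of values $3$ and $4$ gives $M(1,1,1,2)$, where $14243$ is the unique permutation with two occurrences. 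So both counts equal $1$.
\item Your ``product of binomial-type factors'' count is also wrong. On $M$ itself, the smaller block in each region must consist exactly of the doubled values, so $|M^*(p;2)|=1$. There is no free merging whose tie count could change.
\end{itemize}

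\textbf{The fix.} Swap the doubled value with the largest overlap value $v_{\ell-i+1}$ instead; for $\underline{132}$ this swaps values $2$ and $3$, giving $M(1,2,1,1)$. Then $v_{\ell-i+1}$ is the only value with exactly $N_{\ell-i+1}$ letters below it, and it is now repeated, which your two-window argument forbids. Alternatively, use the paper's swap of $k_1$ with the first doubled multiplicity.
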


\begin{rem}
	The standard extended permutation of a non-monotone pattern $p$, taken at its minimal extendable index, need not contain repeated letters. For example, the minimal extendable index of $p = \underline{21435}$ is $3$ with the standard extended permutation $2143657$.
\end{rem}

We divide the proof of Proposition~\ref{PROP:unstable-by-extended} into the following two lemmas.

\begin{lem}\label{LEM:unstable-by-extended-1}
	Let $p=\underline{p_1p_2\cdots p_\ell}$ be a consecutive pattern with the minimal extendable index $i$. Following the notations used in Proposition~\ref{PROP:extended-mset-0}, if exactly one of the numbers $\Delta_{1}^1,\Delta_{2}^1,\ldots,\Delta_{\ell-i+2}^1$ is positive, then the property ``to have two occurrences of $p$" is unstable.
\end{lem}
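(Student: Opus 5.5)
We work with the extended multiset $M$ of $p$ at the minimal extendable index $i$, as given by Proposition~\ref{PROP:extended-mset-0}. By hypothesis exactly one $\Delta_{j_0}^1$ is positive, so $M$ has a single block of $\Delta_{j_0}^1$ consecutive letters of multiplicity $2$. All other letters have multiplicity $1$. We then choose an adjacent transposition that moves one multiplicity $2$ out of this block: we swap the last letter $a$ of the $2$-block with the following letter $a+1$, which has multiplicity $1$ (this letter exists because the block is followed by $\Delta_{j_0}^2+1\ge 1$ ones, except possibly when $j_0=\ell-i+2$). In that exceptional case we swap the first letter of the block with its predecessor instead. Call the resulting multiset $\overline{M}=M_a$. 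Since $|M|=\ell+i-1$, we show that $|M^*(p;2)|\neq|\overline{M}^*(p;2)|$.

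\textbf{Step 1: the count on $M$.} Minimality of $i$ means that two occurrences of $p$ in a word of length $\ell+i-1$ must start exactly $i-1$ positions apart. Otherwise they would overlap at a smaller extendable index, or would need length at least $2\ell>\ell+i-1$. So any $\pi\in M^*(p;2)$ is an extended permutation of $p$ at index $i$ whose letter multiset is $M$. By Lemma~\ref{LEM:ext-like-perm}, the overlap letters are ordered as in \eqref{EQ:pi-order}. Between consecutive overlap values there are exactly $\delta_j^1+\delta_j^2$ letters, of which at least $\max\{\delta_j^1,\delta_j^2\}$ are distinct. Since $M$ is the minimal such multiset, each gap contains exactly $\max\{\delta_j^1,\delta_j^2\}$ distinct values, and exactly $\Delta_j^1$ of them are used once in the prefix block and once in the suffix block. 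We then count the ways to assign values in the $j_0$ gap. The $\delta^2_{j_0}$ prefix positions and the $\delta^1_{j_0}$ suffix positions each carry a fixed relative order. The number of fillings is the number of ways to interleave two chains of sizes $\delta^1,\delta^2$ into $\max$ values with exactly $\min$ coincidences, weakly merging. For a specified multiset the filling is determined by which suffix ranks coincide with which prefix ranks; this is an order-preserving matching, so the count is $\binom{\max}{\min}$ restricted by the multiplicity pattern. With $M$ fixed as the block of $2$'s followed by $1$'s, the count is exactly $1$: the $2$'s occupy the lowest values of the gap. Hence $|M^*(p;2)|=1$, up to the fact that the other gaps contribute a factor of $1$, since there $\Delta^1_j=0$ forces all their values to be distinct.

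\textbf{Step 2: the count on $\overline{M}$.} In $\overline{M}$, the multiplicity-$2$ values in gap $j_0$ are no longer an initial segment. Applying the same analysis requires an order-preserving coincidence between the prefix and suffix chains that skips one value. For every such matching, either (a) no valid matching exists, giving $|\overline{M}^*(p;2)|=0\neq1$, or (b) the moved value lands at a different gap position. If the swap moves a $2$ into the adjacent overlap letter or across a gap boundary, the structure of Lemma~\ref{LEM:ext-like-perm} is violated outright, since overlap letters are used exactly once. We also check that no permutation of $\overline{M}$ can have two occurrences at a different spacing; minimality of $i$ rules this out just as before.

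\textbf{Main obstacle.} The delicate step is Step 2, and more precisely making the swap choice so that the count provably differs. The count in Step 1 may not literally be $1$ if the relative orders of the prefix and suffix chains permit several coincidence patterns. I would instead compute both counts as numbers of order-preserving partial matchings between two chains of lengths $\delta^1_{j_0},\delta^2_{j_0}$ with prescribed coincidence positions. Then I would show that, among values in the gap, the coincidence set being an initial block maximizes the count strictly, or alternatively that some choice of swap reaches $0$. If needed I would instead choose the transposition swapping a multiplicity-$2$ letter with an overlap letter (multiplicity $1$). This forces a repeated overlap value, which is impossible by \eqref{EQ:pi-order}, giving $|\overline{M}^*(p;2)|=0<|M^*(p;2)|$. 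Since the standard extended permutation lies in $M^*(p;2)$, this count is positive, and this cleaner choice is the one I would try first.
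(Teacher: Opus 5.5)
There is a genuine gap. Your main line (Steps~1--2, swapping the last doubled value $a$ of gap $j_0$ with $a+1$) fails whenever $j_0<\ell-i+2$ and $\Delta_{j_0}^2\ge1$. In that case $a+1$ is a non-overlap single value of the same gap, not the overlap value $o_{j_0}$. By Lemma~\ref{LEM:ext-like-perm} the overlap values are pinned down by letter counts. Rearranging multiplicities among the non-overlap values of gap $j_0$ therefore leaves every overlap value, every gap, and the forced prefix/suffix split of each gap unchanged. So the count stays $1$, and your dichotomy (a)/(b) in Step~2 is false. Concretely, $p=\underline{21345}$ has minimal extendable index $i=5$, $(\Delta_1^1,\Delta_2^1)=(1,0)$, $(\Delta_1^2,\Delta_2^2)=(3,3)$ and $M=M(2,1,1,1,1,1,1,1)$. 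Your swap gives $\overline{M}=M(1,2,1,1,1,1,1,1)$, and $M^*(p;2)=\{213451678\}$ and $\overline{M}^*(p;2)=\{213452678\}$ have the same size. The adjacent swap works only when the neighbour is an overlap value, i.e.\ when $\Delta^2_{j_0}=0$ or in your exceptional case $j_0=\ell-i+2$.

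Your fallback is the right idea: exchange the multiplicities of a doubled value $b$ and an overlap value $o_c$ (the $c$-th smallest overlap letter of the standard extended permutation). But ``this forces a repeated overlap value'' is exactly what has to be proved. Nothing a priori places the now-doubled value $o_c$ at an overlap position of some $\pi\in\overline{M}^*(p;2)$. Also, \eqref{EQ:pi-order} only orders the overlap letters among themselves, so it does not give this.

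The missing argument is a count. By minimality of $i$, each $\pi\in\overline{M}^*(p;2)$ is an extended permutation. So by Lemma~\ref{LEM:ext-like-perm} its $c$-th smallest overlap letter has exactly $N_c:=c-1+\sum_{j=1}^{c}(\delta_j^1+\delta_j^2)$ letters (with multiplicity) strictly below it. It also occurs only once in $\pi$: it lies in both occurrences of $p$, so it differs from every other letter. In $M$, $o_c$ is the value with exactly $N_c$ letters below it. In $\overline{M}$:
\begin{itemize}
\item if $b<o_c$, no value has exactly $N_c$ letters below it;
\item if $b>o_c$, the only such value is $o_c$, which now has multiplicity $2$.
\end{itemize}
Hence $\overline{M}^*(p;2)=\varnothing$, while the standard extended permutation lies in $M^*(p;2)$.

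With this written out, your fallback is a complete proof. It uses the same counting as the paper. The paper instead exchanges the doubled value with the extreme value $\ell'$ (or $1$ when $j_0=\ell-i+2$) and derives the contradiction at the largest (or smallest) overlap letter. Your version needs no case split on $j_0$, and not even the hypothesis that only one $\Delta_j^1$ is positive, so it would also cover Lemma~\ref{LEM:unstable-by-extended-2}.
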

\begin{proof}
	Suppose that the only positive number is $\Delta_{j_0}^1$ for some $1\leq j_0\leq \ell-i+2$. Denote the extended multiset of the pattern $p$ at the index $i$ by $M$. Let $$\ell':=\ell-i+1+\Delta_{j_0}^1+\sum_{j=1}^{\ell-i+2}\Delta_j^2\quad\text{and}\quad\ell_1:=j_0-1+\sum_{j=1}^{j_0-1}\Delta_j^2.$$ By Proposition~\ref{PROP:extended-mset-0}, we have $M=M(k_1,k_2,\ldots,k_{\ell'})$ with $k_{\ell_1+1}=2$. The minimality of the extendable index $i$ implies that the standard extended permutation has exactly two occurrences of the pattern $p$ and thus $|M^*(p;2)|>0$. The proof will be completed by constructing a multiset $\overline{M}=M(k'_1,k'_2,\ldots,k'_{\ell'})$ from $M$ such that $|\overline{M}^*(p;2)|=0$. Depending on whether $j_0=\ell-i+2$, there are two methods for constructing $\overline{M}$.
	
	We first assume that $j_0<\ell-i+2$. Then $k_{\ell'-\Delta_{\ell-i+2}^2}=k_{\ell'-\Delta_{\ell-i+2}^2+1}=\cdots=k_{\ell'}=1$. Combining $k_{\ell_1+1}=2$, we have $\ell_1+1<\ell'-\Delta_{\ell-i+2}^2$. Set $\overline{M}$ to be the multiset obtained from $M$ by swapping $k_{\ell_1+1}$ and $k_{\ell'}$. Suppose that there exists a permutation $\pi=\pi_1\pi_2\cdots \pi_{\ell+i-1}\in \overline{M}^*(p;2)$. By the minimality of the extendable index, the first and the last $\ell$ letters of $\pi$ must constitute two occurrences of $p$. Lemma~\ref{LEM:ext-like-perm} shows that there are at least $\Delta_{\ell-i+2}^2$ distinct
	letters larger than $\pi_{\sigma(\ell-i+1)+i-1}$ and at least $\ell+i-\Delta_{\ell-i+2}^2-2$ letters (counting multiplicities) smaller than $\pi_{\sigma(\ell-i+1)+i-1}$ in $\overline{M}$. The former enumeration indicates that $\pi_{\sigma(\ell-i+1)+i-1}\leq\ell'-\Delta_{\ell-i+2}^2$; while the latter one implies that $\pi_{\sigma(\ell-i+1)+i-1}>\ell'-\Delta_{\ell-i+2}^2,$
	since $$\sum_{j=1}^{\ell'-\Delta_{\ell-i+2}^2}k'_j=\ell+i-1-\sum_{j=\ell'-\Delta_{\ell-i+2}^2+1}^{\ell'}k'_{j}=\ell+i-\Delta_{\ell-i+2}^2-2.$$
	This contradiction means no such letter $\pi_{\sigma(\ell-i+1)+i-1}$ exists in $\overline{M}$, proving $|\overline{M}^*(p;2)|=0$.
	
	For the case of $j_0=\ell-i+2$, let $\overline{M}$ be the multiset derived from $M$ by swapping $k_{1}$ and $k_{\ell_1+1}$. Supposing that there exists a permutation $\pi=\pi_1\pi_2\cdots \pi_{\ell+i-1}\in \overline{M}^*(p;2)$, the proof concludes analogously by showing the nonexistence of the letter $\pi_{\sigma(1)+i-1}$ in $\overline{M}$.
\end{proof}


\begin{lem}\label{LEM:unstable-by-extended-2}
	Let $p=\underline{p_1p_2\cdots p_\ell}$ be a consecutive pattern with the minimal extendable index $i$. Following the notations used in Proposition~\ref{PROP:extended-mset-0}, if there exist $j_1\neq j_2\in\{1,2,\ldots,\ell-i+2\}$ such that $\Delta_{j_1}^1,\,\Delta_{j_2}^1>0$, then the property ``to have two occurrences of $p$" is unstable.
\end{lem}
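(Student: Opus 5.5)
Let $M=M(k_1,\ldots,k_{\ell'})$ be the extended multiset of $p$ at the minimal extendable index $i$, as in Proposition~\ref{PROP:extended-mset-0}. Note that $|M|=\ell+i-1$. As in the proof of Lemma~\ref{LEM:unstable-by-extended-1}, minimality of $i$ implies two things. First, the standard extended permutation lies in $M^*(p;2)$. Second, any permutation of any multiset of size $\ell+i-1$ with two occurrences of $p$ must have its first and last $\ell$ letters forming those occurrences, so it is an extended permutation. Hence $|M^*(p;2)|>0$, and it suffices to produce a rearrangement $\overline{M}$ of the multiplicities of $M$ with $|\overline{M}^*(p;2)|=0$. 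The tool is the counting in Lemma~\ref{LEM:ext-like-perm}. In any extended permutation, the overlap letters $\pi_{\sigma(1)+i-1}<\cdots<\pi_{\sigma(\ell-i+1)+i-1}$ cut the letters into $\ell-i+2$ gaps. Gap $j$ contains exactly $\delta_j^1+\delta_j^2$ letters, counted with multiplicity, and at least $\max\{\delta_j^1,\delta_j^2\}=\Delta_j^1+\Delta_j^2$ distinct values.

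I would choose $j_1<j_2$ with $\Delta_{j_1}^1,\Delta_{j_2}^1>0$ and exploit the block structure of \eqref{EQ:ext-mul}. Gap $j$ corresponds to a block of $\Delta_j^1$ letters of multiplicity $2$ followed by $\Delta_j^2$ letters of multiplicity $1$. The separating overlap letter has multiplicity $1$. The natural move is to swap a multiplicity-$2$ entry from block $j_1$ with a multiplicity-$1$ entry lying strictly between, e.g. the overlap letter $\pi_{\sigma(j_1)+i-1}$, whose position in the value order is $j_1+\sum_{\mu\le j_1}(\Delta_\mu^1+\Delta_\mu^2)$. A cleaner choice may be to swap the first $2$ of block $j_1$ with the last $1$ before block $j_2$. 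Either way, $\overline{M}$ differs from $M$ by a transposition of two multiplicities.

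Next, I would argue by contradiction, as in Lemma~\ref{LEM:unstable-by-extended-1}. Suppose $\pi\in\overline{M}^*(p;2)$ and locate the overlap letter $v=\pi_{\sigma(j_1)+i-1}$. There are two constraints on $v$. The distinct-letter lower bound from the gaps $1,\ldots,j_1$ forces $v$ to be at least $j_1+\sum_{\mu\le j_1}(\Delta_\mu^1+\Delta_\mu^2)$. The exact multiplicity count below $v$, namely $\sum_{\mu\le j_1}(\delta_\mu^1+\delta_\mu^2)+(j_1-1)$, must match a prefix sum of the multiplicities $k'_1+\cdots+k'_{v-1}$ of $\overline{M}$. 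One also needs the symmetric counts above $v$ and the counts for the next overlap letter, and this is where the second positive $\Delta_{j_2}^1$ enters. In $M$ the prefix sums hit the required values exactly at the designed positions. After the swap, the prefix sums in the range forced by the distinct-letter bound are shifted by one. Since a $2$ now sits where a $1$ was, the required total is skipped over. The second positive block prevents the defect from being absorbed elsewhere, because letters above and below cannot be redistributed.

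The main obstacle is selecting the swap so that the parity/prefix-sum mismatch is genuinely unavoidable. Several overlap letters may shift simultaneously, and one must check that no alternative assignment of values to the overlap positions fits $\overline{M}$. If a single overlap letter does not give a contradiction, I would instead compare the two gap totals $\delta_{j_1}^1+\delta_{j_1}^2$ and $\delta_{j_2}^1+\delta_{j_2}^2$ simultaneously. This would show that the prefix sums of $\overline{M}$ cannot realize both required cut points at once.
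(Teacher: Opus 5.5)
Your reduction is the same as the paper's. By minimality of $i$, every element of $\overline{M}^*(p;2)$ (a word of length $\ell+i-1$) is an extended permutation, so Lemma~\ref{LEM:ext-like-perm} applies; and the standard extended permutation gives $|M^*(p;2)|>0$.

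\textbf{The gap.} The proposal stops where the proof should begin. You list two possible swaps without committing to either, never compute what the swap does to the prefix sums, and call the decisive step ``the main obstacle,'' with a fallback plan. Several accompanying claims are unjustified and in fact unnecessary: that you also need the counts above $v$ and at the next overlap letter, that $\Delta^1_{j_2}>0$ is what stops the defect being ``absorbed,'' and that ``alternative assignments'' of overlap values must be ruled out. These suggest the mechanism is not yet clear to you. As written, this is a plan, not a proof.

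\textbf{The missing step.} Put $L=\ell-i+1$, $P'(t)=k'_1+\cdots+k'_t$, and $c_j=(j-1)+\sum_{\mu\le j}(\delta^1_\mu+\delta^2_\mu)$. In an extended permutation, the letters in the first and last $i-1$ positions all differ from the overlap letters, because each occurrence of $p$ has distinct letters. Hence Lemma~\ref{LEM:ext-like-perm} gives \emph{exactly} $c_j$ letters, counted with multiplicity, strictly below $v_j:=\pi_{\sigma(j)+i-1}$; that is, $P'(v_j-1)=c_j$. Since $P'$ is strictly increasing, this determines $v_j$ uniquely, so there is no freedom of assignment. If $c_j$ is not a value of $P'$ for a single $j$, then $\overline{M}^*(p;2)=\varnothing$.

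Now take your first candidate.
\begin{itemize}
\item Since $j_1<j_2\le L+1$, block $j_1$ of \eqref{EQ:ext-mul} ends with its separator $1$, at index $b=j_1+\sum_{\mu\le j_1}(\Delta^1_\mu+\Delta^2_\mu)$.
\item Because $2\Delta^1_\mu+\Delta^2_\mu=\delta^1_\mu+\delta^2_\mu$, we have $k_1+\cdots+k_{b-1}=c_{j_1}$.
\item Swap a $2$ of block $j_1$, at some index $a\le b-1$, with $k_b=1$. Then $P'(b-1)=c_{j_1}-1$ and $P'(b)=c_{j_1}+1$.
\item So $c_{j_1}$ is skipped, and no letter can serve as $v_{j_1}$.
\end{itemize}
Your second candidate, the separator just before block $j_2$, works the same way at $v_{j_2-1}$. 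Neither the distinct-letter bound nor the counts from above are needed.

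\textbf{Comparison with the paper.} Once completed, your route is valid but differs from the paper's. The paper swaps $k_{\ell_1+1}$ with the \emph{last} multiplicity $k_{\ell'}$, which equals $1$ because $\Delta^2_{\ell-i+2}>0$. This lowers every prefix sum from index $\ell_1+1$ up to $\ell'-1$, so $c_{j_1}$ is still attained (at index $b$) and nothing goes wrong at $v_{j_1}$. The contradiction appears only at $v_{j_2-1}$:
\begin{itemize}
\item $P'(\ell_2)=\ell_3=c_{j_2-1}$ forces $v_{j_2-1}=\ell_2+1$.
\item But $k'_{\ell_2+1}=2$ is the first entry of block $j_2$, leaving only $\ell+i-\ell_3-3$ letters above instead of $\ell+i-\ell_3-2$.
\end{itemize}
This is exactly where the paper uses $\Delta^1_{j_2}>0$. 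Your local swap places the doubled multiplicity directly on a separator, so it never uses $j_2$. It therefore proves instability whenever some $\Delta^1_j>0$ with $j\le \ell-i+1$, which also covers the first case of Lemma~\ref{LEM:unstable-by-extended-1}.
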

\begin{proof}
	The basic idea of the proof follows a similar approach to that in Lemma~\ref{LEM:unstable-by-extended-1}.
	Without loss of generality, we 
	can choose minimal indices $j_1<j_2$ satisfying $\Delta_{j_1}^1,\,\Delta_{j_2}^1>0$. Denote the extended multiset of the pattern $p$ at the index $i$ by $M$. Let $$\ell':=\ell-i+1+\sum_{j=1}^{\ell-i+2}(\Delta_j^1+\Delta_j^2),\, \ell_1:=j_1-1+\sum_{j=1}^{j_1-1}\Delta_j^2,\text{ and }\ell_2:=j_2-1+\Delta_{j_1}^1+\sum_{j=1}^{j_2-1}\Delta_j^2.$$ By Proposition~\ref{PROP:extended-mset-0}, we have $M=M(k_1,k_2,\ldots,k_{\ell'})$, where 
	\begin{align*}
	k_1=k_2=\cdots =k_{\ell_1}&=1=k_{\ell_1+\Delta_{j_1}^1+1}=k_{\ell_1+\Delta_{j_1}^1+2}=\cdots=k_{\ell_2},\\ k_{\ell_1+1}=k_{\ell_1+2}=\cdots=k_{\ell_1+\Delta_{j_1}^1}&=2=k_{\ell_2+1}.
	\end{align*}
	Since $p_{\sigma(\ell-i+1)}\neq p_{\sigma(\ell-i+1)+i-1}$, we have $\Delta_{\ell-i+2}^2>0$ and thus $k_{\ell'}=1$. Let $\overline{M}:=M(k'_1,k'_2,\ldots,k'_{\ell'})$ be the multiset obtained from $M$ by swapping $k_{\ell_1+1}$ and $k_{\ell'}$. Since $i$ is the minimal extendable index, the corresponding standard extended permutation is in $M^*(p;2)$. The proof will be completed by showing that $\overline{M}^*(p;2)=\varnothing$.
	 
	Suppose conversely that there exists $\pi=\pi_1\pi_2\cdots \pi_{\ell+i-1}\in \overline{M}^*(p;2)$. By the minimality of the extendable index, the first and the last $\ell$ letters of $\pi$ must form two occurrences of $p$. By Lemma~\ref{LEM:ext-like-perm}, there are at least $$\ell_3:=j_2-2+\sum_{j=1}^{j_2-1}(\delta_j^1+\delta_j^2)=j_2-2+2\Delta_{j_1}^1+\sum_{j=1}^{j_2-1}\Delta_j^2$$ 
	letters (counting multiplicities) smaller than $\pi_{\sigma(j_2-1)+i-1}$ and at least $\ell+i-\ell_3-2$ letters (counting multiplicities) larger than $\pi_{\sigma(j_2-1)+i-1}$ in $\overline{M}$. On the one hand, we have $$\sum_{j=1}^{\ell_2}k'_j=\ell_2+\Delta_{j_1}^1-1=\left(j_2-1+\Delta_{j_1}^1+\sum_{j=1}^{j_2-1}\Delta_j^2\right)+\Delta_{j_1}^1-1=\ell_3,$$
	so $\pi_{\sigma(j_2-1)+i-1}>\ell_2$. On the other hand, the relation $$\sum_{j=\ell_2+2}^{\ell'}k'_{j}=\ell+i-1-\sum_{j=1}^{\ell_2}k'_j-k'_{\ell_2+1}=\ell+i-\ell_3-3<\ell+i-\ell_3-2$$
	implies that $\pi_{\sigma(j_2-1)+i-1}<\ell_2+1$. Therefore, such a letter $\pi_{\sigma(j_2-1)+i-1}$ cannot be found in $\overline{M}$, i.e., $\overline{M}^*(p;2)=\varnothing$.
\end{proof}

\begin{exa}
	We provide some examples to clarify Proposition~\ref{PROP:unstable-by-extended}.
	\begin{enumerate}
		\item Let $p=\underline{3142}$. Then index $3$ is the minimal extendable index of $p$ with standard extended permutation $314253$ and extended multiset $M=M(1,1,2,1,1)$. As shown in the proof of Lemma~\ref{LEM:unstable-by-extended-1}, we set $\overline{M}=M(1,1,1,1,2)$ and prove that $\overline{M}^*(p;2)=\varnothing$. Otherwise, the two occurrences of the pattern $p$ in a permutation $\pi=\pi_1\pi_2\cdots \pi_6\in \overline{M}^*(p;2)$ must be the first four letters and the last four letters. This means that $\pi_1,\pi_2,\pi_4<\pi_3$ and $\pi_4,\pi_6<\pi_3<\pi_5$. Therefore, there is one letter larger than $\pi_3$ and four letters smaller than $\pi_3$, but we cannot find such a letter in $\overline{M}$, which is a contradiction. On the other hand, we have $314253\in M^*(p;2)$, so $|\overline{M}^*(p;2)|=0<|M^*(p;2)|$.
		
		\item Let $p=\underline{24135}$. Then the minimal extendable index of $p$ is $4$. As stated in Example~\ref{EX:extended-mset}~$(i)$, the corresponding extended multiset is $M(2,1,1,2,1,1)$ and $\Delta_1^1=\Delta_2^1=1>0$. Following the proof of Lemma~\ref{LEM:unstable-by-extended-2}, we set $\overline{M}:=M(1,1,1,2,1,2)$, $j_1=1$, and $j_2=2$, and verify that $\overline{M}^*(p;2)=\varnothing$. Otherwise, for a permutation $\pi=\pi_1\pi_2\cdots \pi_8\in \overline{M}^*(p;2)$, the occurrences of $p$ in $\pi$ similarly imply $\pi_1,\pi_3<\pi_4<\pi_2,\pi_5$ and $\pi_6<\pi_4<\pi_5,\pi_7,\pi_8$. Then there are three letters smaller than $\pi_4$ and four letters larger than $\pi_4$, but such a letter cannot be found in $\overline{M}$. So we have $\overline{M}^*(p;2)=\varnothing$. 
	\end{enumerate}
\end{exa}

In particular, Corollary~\ref{COR:extended-mset} and Proposition~\ref{PROP:unstable-by-extended} imply the following corollary.

\begin{cor}\label{COR:unstable-by-extended}
	Let $p=\underline{p_1p_2\cdots p_\ell}$ be a consecutive pattern such that $\{p_1, p_\ell\}\neq\{1,\ell\}$. If the index $\ell$ is the unique extendable index of $p$, then the property ``to have two occurrences of $p$" is unstable and thus the pattern $p$ is unstable.
\end{cor}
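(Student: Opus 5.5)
The plan is to deduce Corollary~\ref{COR:unstable-by-extended} directly from Proposition~\ref{PROP:unstable-by-extended}, more precisely from Lemmas~\ref{LEM:unstable-by-extended-1} and~\ref{LEM:unstable-by-extended-2}, which give the sharper conclusion about ``two occurrences''. Since $\ell$ is the unique extendable index of $p$, it is in particular the minimal extendable index. So it suffices to show that the standard extended permutation at the index $\ell$ contains a repeated letter. Equivalently, in the notation of Proposition~\ref{PROP:extended-mset-0}, we need some $\Delta_j^1>0$.

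By symmetry we may assume $p_1<p_\ell$; the case $p_1>p_\ell$ is handled the same way using the ``ends with $p$'' half of Corollary~\ref{COR:extended-mset}. Corollary~\ref{COR:extended-mset} describes the extended multiset at index $\ell$ explicitly. Each letter $t$ with $1\le t\le p_1-1$ or $p_\ell+1\le t\le \ell$ has multiplicity $2$. The hypothesis $\{p_1,p_\ell\}\neq\{1,\ell\}$, together with $p_1<p_\ell$, forces $p_1>1$ or $p_\ell<\ell$. Hence at least one of these ranges is nonempty, and the multiset has a letter of multiplicity $2$.

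Since the standard extended permutation is a permutation of this multiset, it contains a repeated letter. Proposition~\ref{PROP:unstable-by-extended} then applies. Its proof splits according to whether exactly one $\Delta_j^1$ is positive (Lemma~\ref{LEM:unstable-by-extended-1}) or at least two are (Lemma~\ref{LEM:unstable-by-extended-2}). In either case the property ``to have two occurrences of $p$'' is unstable. Therefore $|M^*(p;2)|\neq|\overline M^*(p;2)|$ for the multisets constructed there, where $\overline M$ arises from $M$ by a transposition of multiplicities. By definition, this means the pattern $p$ is unstable.

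There is one point to check. A repeated letter in the standard extended permutation should be the same thing as some $\Delta_j^1>0$, i.e., the multiplicity-$2$ entries in \eqref{EQ:ext-mul}. Comparing Proposition~\ref{PROP:extended-mset-0} with Corollary~\ref{COR:extended-mset} at $i=\ell$ confirms this. When $i=\ell$ the overlap is the single letter $\pi_\ell$, and $\delta_1^1=p_1-1$, $\delta_1^2=p_\ell-1$, $\delta_2^1=\ell-p_1$, $\delta_2^2=\ell-p_\ell$. So $\Delta_1^1=p_1-1$ and $\Delta_2^1=\ell-p_\ell$, and at least one of them is positive under our hypothesis. This bookkeeping is the only real obstacle, and it is routine.
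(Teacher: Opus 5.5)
Your proposal is correct and follows the paper's route. Since $\ell$ is always extendable, uniqueness makes it the minimal extendable index. Corollary~\ref{COR:extended-mset}, or equivalently $\Delta_1^1=\min\{p_1,p_\ell\}-1$ and $\Delta_2^1=\ell-\max\{p_1,p_\ell\}$, then shows the extended multiset has a letter of multiplicity $2$ whenever $\{p_1,p_\ell\}\neq\{1,\ell\}$, and Lemmas~\ref{LEM:unstable-by-extended-1} and~\ref{LEM:unstable-by-extended-2}, which make up Proposition~\ref{PROP:unstable-by-extended}, yield the sharper ``two occurrences'' instability that the corollary asserts.
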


\section{Applications of stability: generalized Eulerian numbers}\label{SEC:application}

This section presents examples of applications of the concept of pattern stability. Focusing on the stable pattern $\underline{12}$, we present its distribution over permutations of restricted multisets in terms of the recurrence relations and mixed generating functions.

Let $M=M(k_1,k_2,\ldots,k_n)$ be a multiset and $s$ a non-negative integer. In the special case where $k_1=k_2=\cdots=k_n=1$, it is well-known that the classical \textit{Eulerian number} $E_{n,\,s}$ counts the size of $M^*(\underline{12};s)$~\cite{MacMahon1915,Carlitz1954}. For a general multiset $M$, MacMahon~\cite[Item 155]{MacMahon1915} showed that the number $|M^*(\underline{12};s)|$ equals the coefficient of the term $x_0^sx_1^{k_1}x_2^{k_2}\cdots x_n^{k_n}$ in the expression
\begin{align*}
	\left(x_1+x_0\left(x_2+\cdots+ x_n\right)\right)^{k_1}\left(x_1+x_2+x_0\left(x_3+\cdots +x_n\right)\right)^{k_2}\cdots\left(x_1+x_2+\cdots +x_n\right)^{k_n}
\end{align*}
and obtained the generating function $G_{\underline{12}}$ as shown in Equation~\eqref{EQ:GF-12_}. Both formulas provide explicit ways to compute the size of the set $M^*(\underline{12};s)$.
Theorem~\ref{THM:mono-cons-stable} implies that $|M^*(\underline{12};s)|$ only depends on the multiset $K:=\{k_1,k_2,\ldots,k_n\}$ and the integer $s$. Therefore, we can denote $|M^*(\underline{12};s)|$ by $A_{K,\,s}$. For Eulerian numbers, a fundamental recurrence relation is that
\begin{align}\label{EQ:rec-Eml}
E_{m,\,s}=(s+1)E_{m-1,\,s}+(m-s)E_{m-1,\,s-1}\text{  for any $m,\,s\geq1$},
\end{align}
see, for example,~\cite{Petersen2015}. 
How to count the size of $M^*(\underline{12};s)$ for general multisets is known as the Simon Newcomb problem. In his study of $r$-descent numbers and $r$-major index, Rawlings presented a recurrence for them that includes~\eqref{EQ:rec-Eml} as a special case, see~\cite[Equation (1.8)]{Rawlings1981}. Setting $q=r=1$ and assuming $k_n>1$, the recurrence reduces to
\[k_nA_{K,\,s}=(s+k_n)A_{K',\,s}+\left(\sum_{i=1}^nk_i-k_n-s+1\right)A_{K',\,s-1},\]
where $K'$ is the multiset $\{k_1,k_2,\ldots,k_{n-1},k_{n}-1\}$. The stability further implies the following theorem.

\begin{thm}
	Let $s$ be a positive integer and $K=\{k_1,k_2,\ldots,k_n\}$ be a non-empty multiset with elements in $\bP$. Then for any $j=1,2,\ldots,n$ with $k_j>1$, we have
	\begin{align}\label{EQ:rec-A_Ks-gen-E}
	k_jA_{K,\,s}=(s+k_j)A_{\hat{K}_j,\,s}+\left(\sum_{i=1}^nk_i-k_j-s+1\right)A_{\hat{K}_j,\,s-1},
	\end{align}
	where $\hat{K}_j$ is the multiset $\{k_1,\ldots,k_{j-1},k_j-1,k_{j+1},\ldots,k_{n}\}$.
\end{thm}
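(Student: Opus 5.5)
The plan is to reduce the statement to Rawlings' recurrence, which is the special case $j=n$, by means of the stability of $\underline{12}$ given in Lemma~\ref{LEM:cons-length-2} (equivalently Theorem~\ref{THM:mono-cons-stable}).

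First, fix $j$ with $k_j>1$. Choose a permutation $\sigma\in\fS_n$ that moves position $j$ to position $n$, for instance the cycle $(j,j+1,\ldots,n)$ or any product of adjacent transpositions as in Fact~\ref{FACT:transpositon}. Consider the multiset $N:=\sigma\cdot M(k_1,\ldots,k_n)$, whose last multiplicity is $k_j>1$. Since the pattern $\underline{12}$ is stable, $|M^*(\underline{12};s)|=|N^*(\underline{12};s)|$ for every $s$. This equality is precisely why the notation $A_{K,s}$ depends only on the multiset $K$ of multiplicities, so $A_{K,s}$ can be computed using $N$ instead of $M$.

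Next, apply Rawlings' recurrence (the specialization of~\cite[Equation (1.8)]{Rawlings1981} quoted just before the theorem) to $N$. Its last multiplicity is $k_j$, and the sum of all multiplicities is unchanged, namely $\sum_{i}k_i$. Decreasing the last multiplicity of $N$ by one gives the multiset $\sigma\cdot M(k_1,\ldots,k_j-1,\ldots,k_n)$. Its multiplicity multiset is $\hat K_j$, so, again by stability, the numbers it contributes are $A_{\hat K_j,s}$ and $A_{\hat K_j,s-1}$. Substituting these into Rawlings' formula yields~\eqref{EQ:rec-A_Ks-gen-E} directly.

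If a self-contained argument is preferred over citing Rawlings, I would prove the case $j=n$ by insertion. Take a permutation of $M(k_1,\ldots,k_n-1)$ with $t$ ascents and insert one more copy of the largest letter $n$. Inserting $n$ at the end, or into an ascent slot $ab$ with $a<b$ (where $b$ may itself be $n$), leaves the number of ascents unchanged. Inserting $n$ at the beginning or into a non-ascent slot raises it by one. So each permutation with $s$ ascents yields $s+1$ insertions that preserve the count. The remaining slots number $m-s$, where $m=\sum_i k_i$ is the total size after insertion (so $m-1$ letters before it), and each permutation with $s-1$ ascents yields $m-1-(s-1)=m-s$ insertions that create one new ascent.

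The main obstacle in this direct route is the overcounting caused by identical copies of $n$. Each resulting permutation is produced once for each copy of $n$ that can be viewed as the inserted one, which gives the factor $k_n$ on the left. The coefficients must then be adjusted, because inserting $n$ immediately before or after an existing $n$ gives the same word, and one must check that the insertion slots adjacent to existing $n$'s are consistently classified. A careful count of distinguishable insertions turns the coefficient $s+1$ into $s+k_n$, and the coefficient $m-s$ into $m-k_n-s+1$. I would verify these coefficients by marking one copy of $n$, which turns the problem into a double count. Since the paper quotes Rawlings' recurrence, however, I would rely on it, and the only genuine step is then the stability transfer described above.
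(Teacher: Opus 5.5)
Your main argument is the paper's: Rawlings' recurrence gives the case $j=n$, and the stability of $\underline{12}$ lets you permute the multiplicities so that $k_j$ is in the last position, which turns $K'$ into $\hat K_j$. Your optional insertion sketch has two slips. First, you have the ends reversed: prepending $n$ never creates an ascent, while appending it does unless the last letter is already $n$. Second, once you mark one copy of $n$, the slots that preserve the ascent count are exactly the beginning, the $s$ ascent slots, and the $k_n-1$ slots immediately after existing copies of $n$. This gives the coefficients $s+k_n$ and $m-k_n-s+1$ directly, with no further adjustment for identical words.
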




To obtain other recurrence relations, we restrict ourselves to the case where $k_i\in\{1,2\}$ for $i=1,2,\ldots,n$. In this case, the size of $M^*(\underline{12};s)$ only depends on $s$, the number $k$ of $2$'s in $K$, and the size $m:=|M|$. This allows us to simplify the notation, writing $A_{m,\,k,\,s}$ for $A_{M(m-2k,k),\,s}$. 

\begin{thm}\label{THM:rec-A_mkl}
	Let $m,\,k,\,s$ be non-negative integers. We have
	\begin{align}\label{EQ:rec-A_mkl}
	A_{m,\,k,\,s}=\frac{1}{2}\left(A_{m,\,k-1,\,s}+A_{m-1,\,k-1,\,s}-A_{m-1,\,k-1,\,s-1}\right)
	\end{align}
	for any $m,\,s\geq1$ and $1\leq k\leq m/2$. Furthermore, we have $A_{m,\,k,\,s}=0$ for any $m,\,s\geq0$ and $k>m/2$; $A_{m,\,k,\,0}=1$ for any $m\geq0$ and $0\leq k\leq m/2$; $A_{m,\,0,\,s}=E_{m,\,s}$ for any $m,\,s\ge 0$.
\end{thm}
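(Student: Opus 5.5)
The plan is a double-counting argument that relabels the two copies of one repeated letter. By Theorem~\ref{THM:mono-cons-stable}, $|M^*(\underline{12};s)|$ depends only on the multiset of multiplicities. So for $1\le k\le m/2$ I may choose a convenient representative multiset. I take $M$ on the letters $\{1,\ldots,n\}$ with $n=m-k$, with $k_n=2$, and with exactly $k-1$ of the other letters of multiplicity $2$. Let $M'$ be the multiset obtained by replacing the two copies of $n$ by the two distinct letters $n$ and $n+1$. Then $|M^*(\underline{12};s)|=A_{m,k,s}$ and $|M'^*(\underline{12};s)|=A_{m,k-1,s}$.

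First I define the map $M'^*\to M^*$ that replaces $n+1$ by $n$. It is surjective and exactly $2$-to-$1$: each $\pi\in M^*$ has two preimages, obtained by labelling its two copies of $n$ as $(n,n+1)$ or as $(n+1,n)$ from left to right. Next I compare ascent numbers. Since $n$ and $n+1$ both exceed every other letter, any adjacent pair containing at most one of these two copies has the same ascent/non-ascent status in $\pi$ and in either preimage. Thus the only possible change comes from a factor $nn$ in $\pi$.
\begin{itemize}
\item If the two copies of $n$ are not adjacent, both preimages have exactly $\underline{12}(\pi)$ ascents.
\item If they are adjacent, the preimage containing the factor $n(n+1)$ gains one ascent. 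The preimage containing $(n+1)n$ has the same count as $\pi$.
\end{itemize}

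Let $B_s$ be the number of $\pi\in M^*(\underline{12};s)$ in which the two $n$'s are adjacent. The two cases give
\[
A_{m,k-1,s}=2A_{m,k,s}-B_s+B_{s-1}.
\]
To evaluate $B_s$, I contract the factor $nn$ to a single $n$. This is a bijection from such permutations onto the permutations of the multiset obtained from $M$ by lowering $k_n$ to $1$, which has size $m-1$ and $k-1$ doubled letters. The contraction preserves the number of ascents, because the plateau $nn$ is not an ascent and the neighbours of the block see the letter $n$ in both words. Hence $B_s=A_{m-1,k-1,s}$ and $B_{s-1}=A_{m-1,k-1,s-1}$, and solving the displayed identity for $A_{m,k,s}$ gives \eqref{EQ:rec-A_mkl}. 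The hypothesis $s\ge1$ keeps the index $s-1$ meaningful.

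The boundary values are direct.
\begin{itemize}
\item If $k>m/2$, no multiset of size $m$ has $k$ letters of multiplicity $2$, so $A_{m,k,s}=0$.
\item For $s=0$, the unique permutation with no ascent is the weakly decreasing one, so $A_{m,k,0}=1$.
\item For $k=0$, all multiplicities equal $1$ and we recover the Eulerian numbers $E_{m,s}$.
\end{itemize}
The only step that needs care is the ascent bookkeeping. I must check that ascents involving a single copy of $n$ or $n+1$ are unaffected by the relabelling and by the contraction, which holds because these letters are maximal. The main step, though, is the initial reduction: stability is what allows the doubled letter to be taken as the largest letter.
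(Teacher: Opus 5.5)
Your argument is correct and is essentially the paper's proof. The paper likewise splits one doubled letter into two distinct letters, counts non-adjacent placements twice, and evaluates the two adjacent orders (one preserving the ascent count, one adding an ascent) by deleting a copy to get $A_{m-1,k-1,s}$ and $A_{m-1,k-1,s-1}$; the only cosmetic difference is that it splits the letter $k$ into $k<k^+<k+1$ rather than first moving the doubled letter to the top, so the maximality you single out as the key step is not actually needed, since any intermediate value for the new letter already leaves all comparisons with the other letters unchanged.
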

\begin{proof}
	The initial values and boundary conditions are straightforward, so we focus on the proof of Equation~\eqref{EQ:rec-A_mkl}. Suppose that $m\geq1$, $1\leq k\leq m/2$, and $s\geq1$. By Theorem~\ref{THM:mono-cons-stable}, we can further assume that $A_{m,\,k,\,s}=|M^*(\underline{12};s)|$ with the multiset $M=\{1,1,2,2,\ldots,k,k,k+1,k+2,\ldots,m-k\}$. Consider the multiset $\overline{M}=\{1,1,2,2,\ldots,k,k^+,k+1,k+2,\ldots,m-k\}$, where $k^+$ denotes a number satisfying $k<k^+<k+1$. Then there are two ways to obtain a permutation in $M^*(\underline{12};s)$ from that in $\overline{M}^*$. One is to choose a permutation in $\overline{M}^*(\underline{12};s)$ where the letters $k$ and $k^+$ are not next to each other and then replace $k^+$ by $k$. Note that in this way, a permutation in $M^*(\underline{12};s)$ can be obtained from exactly two distinct ones in $\overline{M}^*(\underline{12};s)$. The other is to choose a permutation in $\overline{M}^*(\underline{12};s)$ with $k^+k$ as a factor and then replace $k^+$ by $k$. In this way, the permutations in the two sets are in one-to-one correspondence. Let 
	$$M^1:=\{\pi\in \overline{M}^*(\underline{12};s)\ |\ \text{the letters $k$ and $k^+$ are not next to each other in }\pi\},$$
	$$M^2:=\{\pi\in \overline{M}^*(\underline{12};s)\ |\ \pi\text{ has a factor $k^+k$}\},\text{ and }M^3:=\{\pi\in \overline{M}^*(\underline{12};s)\ |\ \pi\text{ has a factor $kk^+$}\}.$$
	Then $\overline{M}^*(\underline{12};s)$ is the disjoint union of the subsets $M^1$, $M^2$, and $M^3$, and 
	\begin{align}\label{eq:A-M}
	A_{m,\,k,\,s}=\frac{1}{2}|M^1|+|M^2|.
	\end{align} 
	We count the size of $M^2$ first. Deleting the letter $k^+$, we obtain a bijection between the sets $M^2$ and $\widetilde{M}^*(\underline{12};s)$, where $\widetilde{M}=\{1,1,2,2,\ldots,k-1,k-1,k,k+1,\ldots,m-k\}$. Therefore, we have 
	\begin{align}\label{eq:M2}
	|M^2|=A_{m-1,\,k-1,\,s}. 
	\end{align}
	Similarly, we have $|M^3|=A_{m-1,\,k-1,\,s-1}$. Note that $|\overline{M}^*(\underline{12};s)|=A_{m,\,k-1,\,s}$, so we have 
	\begin{align}\label{eq:M1}
	|M^1|=|\overline{M}^*(\underline{12};s)|-|M^2|-|M^3|=A_{m,\,k-1,\,s}-A_{m-1,\,k-1,\,s}-A_{m-1,\,k-1,\,s-1}.
	\end{align} 
	Combining Equations~\eqref{eq:A-M},~\eqref{eq:M2}, and~\eqref{eq:M1}, we obtain the desired Equation~\eqref{EQ:rec-A_mkl}.
\end{proof}

Recall that the exponential generating function of the Eulerian numbers $E_{m,s}$ is 
\begin{align}\label{EQ:E(x,y)}
\cE(x,y):=\sum_{m,\,s\ge0}\frac{E_{m,\,s}}{m!}x^my^s=\frac{(1-y)e^{x(1-y)}}{1-ye^{x(1-y)}}.
\end{align}
Using Theorem~\ref{THM:rec-A_mkl}, the mixed generating function
\begin{align}\label{EQ:def-A(x,y,z)}
\cA(x,y,z):=\sum_{m,\,s\ge0}\sum_{k=0}^{\lfloor\frac{m}{2}\rfloor}\frac{A_{m,\,k,\,s}}{(m-2k)!}x^{m-2k}y^kz^s,
\end{align}
for $A_{m,\,k,\,s}$ can be described by a partial differential equation. 

\begin{thm}\label{THM:A-pde}
The mixed generating function $\cA(x,y,z)$ satisfies the partial differential equation
\[
\cA(x,y,z)
= \frac{y}{2}\frac{\partial^2 \cA(x,y,z)}{\partial x^2}
+ \frac{y}{2}(1-z)\frac{\partial \cA(x,y,z)}{\partial x}
+ \frac{(1-z)e^{x(1-z)}}{1 - z e^{x(1-z)}}.
\]
\end{thm}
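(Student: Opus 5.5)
The plan is to turn the recurrence of Theorem~\ref{THM:rec-A_mkl} into an identity of formal power series. Throughout, I index $A_{m,k,s}$ by the number $n:=m-2k\ge 0$ of letters of multiplicity one rather than by $m$, so that the monomial attached to $A_{m,k,s}$ in \eqref{EQ:def-A(x,y,z)} is $x^n y^k z^s/n!$.

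First I extend \eqref{EQ:rec-A_mkl} to $s=0$, using the convention $A_{m,k,-1}=0$. At $s=0$ both sides equal $1$ by the boundary values $A_{m,k,0}=1$, since $\tfrac12(1+1-0)=1$. Hence for every $k\ge1$, $n\ge0$ and $s\ge0$,
\[
2A_{m,k,s}=A_{m,k-1,s}+A_{m-1,k-1,s}-A_{m-1,k-1,s-1}.
\]
When $k\ge1$ we automatically have $m\ge2$, so no further case distinction is needed. I also record which parameters each right-hand term carries. The term $A_{m,k-1,s}$ has $n+2$ singletons and $k-1$ doubles. The terms $A_{m-1,k-1,s}$ and $A_{m-1,k-1,s-1}$ have $n+1$ singletons and $k-1$ doubles. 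In every case the arguments stay in the admissible range $k-1\le (m-1)/2$, so the recurrence may be used termwise.

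Next I multiply the extended recurrence by $x^n y^k z^s/n!$ and sum over $n\ge0$, $k\ge1$, $s\ge0$. The left side gives $2(\cA-\cA_0)$, where $\cA_0$ collects the $k=0$ terms. Since $A_{m,0,s}=E_{m,s}$, this is
\[
\cA_0=\sum_{n,s}E_{n,s}x^nz^s/n!=\cE(x,z)=\frac{(1-z)e^{x(1-z)}}{1-ze^{x(1-z)}}
\]
by \eqref{EQ:E(x,y)}. On the right I reindex $k'=k-1$ in each sum. The term $A_{m,k-1,s}$ has $n+2$ singletons, and $\sum A\, x^{n}/n!$ with the singleton count shifted by two is exactly $\partial_x^2$ of the series in $x^{n+2}/(n+2)!$. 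So that sum equals $y\,\partial^2\cA/\partial x^2$. By the same reasoning, the term $A_{m-1,k-1,s}$ gives $y\,\partial\cA/\partial x$. The term $A_{m-1,k-1,s-1}$ also gives $y\,\partial\cA/\partial x$, but carries an extra factor $z$ from shifting $s$.

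Combining these gives $2(\cA-\cE(x,z))=y\cA_{xx}+y(1-z)\cA_x$, which rearranges to the stated equation. The only step needing genuine care is the bookkeeping at the boundaries. I must check that the $k=0$ layer is the only one excluded from the recurrence, and it is exactly what produces the Eulerian term. I must also check that the $s=0$ and $k>m/2$ boundaries introduce no stray terms. The first is handled by the convention $A_{m,k,-1}=0$ together with the check above. The second holds because $A_{m,k,s}=0$ whenever $k>m/2$, so those terms vanish.
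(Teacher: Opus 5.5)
Your proof is correct and follows the paper's argument. You reindex by the number of singletons, apply the recurrence of Theorem~\ref{THM:rec-A_mkl} termwise, and identify the shifted sums as $y\,\partial_x^2\cA$, $y\,\partial_x\cA$ and $yz\,\partial_x\cA$, with the $k=0$ layer giving $\cE(x,z)$. The only difference is boundary bookkeeping: you extend the recurrence to $s=0$ (checking $\tfrac12(1+1-0)=1$), whereas the paper splits off the $s=0$, $k\ge1$ layer as $ye^x/(1-y)$ and lets it cancel against the $s=0$ corrections to $\partial_x^2\cA$ and $\partial_x\cA$, so yours is a slightly cleaner packaging of the same computation.
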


\begin{proof}
	By Theorem~\ref{THM:rec-A_mkl} and Equation~\eqref{EQ:E(x,y)}, we have 
	\begin{align}
		\cA(x,y,z)=&\sum_{m,\,s\ge0}\sum_{k=0}^{\lfloor\frac{m}{2}\rfloor}\frac{A_{m,\,k,\,s}}{(m-2k)!}x^{m-2k}y^kz^s
		=\sum_{m,\,k,\,s\ge0}\frac{A_{m+2k,\,k,\,s}}{m!}x^{m}y^kz^s\nonumber\\
		=&\sum_{m\ge0,\,k,\,s\ge1}\frac{A_{m+2k,\,k,\,s}}{m!}x^my^kz^s+\sum_{m,\,s\ge0}\frac{A_{m,\,0,\,s}}{m!}x^mz^s+\sum_{m\ge0,\,k\ge1}\frac{A_{m+2k,\,k,\,0}}{m!}x^my^k\nonumber\\
		=&\sum_{m\ge0,\,k,\,s\ge1}\frac{A_{m+2k,\,k,\,s}}{m!}x^my^kz^s+\frac{(1-z)e^{x(1-z)}}{1-ze^{x(1-z)}}+\frac{ye^x}{1-y}.\label{eq:rec-for-Axyz}
	\end{align}
	For the first term in~\eqref{eq:rec-for-Axyz}, Theorem~\ref{THM:rec-A_mkl} implies that
	\begin{align}
	&\sum_{m\geq0,\,k,\,s\ge1}\frac{A_{m+2k,\,k,\,s}}{m!}x^my^kz^s\nonumber\\
	=&\sum_{m\geq0,\,k,\,s\ge1}\frac{1}{2\cdot m!}\left(A_{m+2k,\,k-1,\,s}-A_{m+2k-1,\,k-1,\,s-1}+A_{m+2k-1,\,k-1,\,s}\right)x^my^kz^s\nonumber\\
	=&\sum_{m\geq2,\,k\ge0,\,s\ge1}\frac{A_{m+2k,\,k,\,s}}{2\cdot (m-2)!}x^{m-2}y^{k+1}z^s
	-\sum_{m\geq1,\,k,\,s\ge0}\frac{A_{m+2k,\,k,\,s}}{2\cdot (m-1)!}x^{m-1}y^{k+1}z^{s+1}\nonumber\\
	&+\sum_{m,\,s\ge1,\,k\ge0}\frac{A_{m+2k,\,k,\,s}}{2\cdot (m-1)!}x^{m-1}y^{k+1}z^s\nonumber\\
	=&\,\frac{y}{2}\left(\frac{\partial^2 \cA(x,y,z)}{\partial x^2}-\sum_{m,\,k\ge0}\frac{A_{m+2k+2,\,k,\,0}}{m!}x^my^k\right)
	-\frac{yz}{2}\frac{\partial \cA(x,y,z)}{\partial x}\nonumber\\
	&+\frac{y}{2}\left(\frac{\partial \cA(x,y,z)}{\partial x}-\sum_{m,\,k\ge0}\frac{A_{m+2k+1,\,k,\,0}}{m!}x^my^k\right)\nonumber\\
	=&\,\frac{y}{2}\frac{\partial^2 \cA(x,y,z)}{\partial x^2}
	+\frac{y}{2}(1-z)\frac{\partial \cA(x,y,z)}{\partial x}
	-\frac{ye^x}{1-y}.\label{eq:first-term}
	\end{align}
	Substituting Equation~\eqref{eq:first-term} into Equation~\eqref{eq:rec-for-Axyz}, we obtain the desired result.
\end{proof}

We note that solving the partial differential equation in Theorem~\ref{THM:A-pde} using Maple is possible, but it results in a cumbersome expression for $\cA(x,y,z)$ involving integrals, so we omit it here.

\section{Conclusion}\label{SEC:conclusion}

We conclude by proposing several open conjectures and problems for future research.

We have provided a complete description of the stability of classical patterns. However, the classification for consecutive patterns remains incomplete. In fact, despite extensive computer experiments, we have not found any stable non-monotone consecutive patterns. We therefore propose the following conjecture.

\begin{conj}\label{CONJ:conse-pattern-charact}
	A consecutive pattern is stable if and only if it is monotone.
\end{conj}

Throughout this paper, we focus primarily on classical patterns and consecutive patterns with distinct letters. For patterns with repeated letters, or vincular patterns, computer experiments suggest that the property ``to have exactly one occurrence of pattern $p$" is unstable for the following cases:
\begin{itemize}
	\item the vincular patterns: $1\underline{23}$, $1\underline{32}$, and $2\underline{13}$;
	\item the patterns with plateaux: $112$, $\underline{112}$, $1\underline{12}$, and $\underline{11}2$.
\end{itemize}
Thus, known stable patterns are quite rare, consisting only of patterns of length two, one-letter patterns of arbitrary length, and monotone consecutive patterns. This motivates the following problem.

\begin{prob}
Find new stable patterns, or prove that none exist.
\end{prob}

One reason for the scarcity of stable patterns is that the definition of stability is particularly stringent, as it requires the property to hold over all permutations of all multisets. A natural relaxation is to explore stability under restricted permutation conditions. For example, Yan et al.~\cite{Yan2022} showed that the property ``to be a quasi-Stirling permutation with $i$ descents and $j$ plateaux" is stable for any integers $i,j\geq0$. Alternatively, we can consider stability under a subgroup of the symmetric group. For instance, by considering the subgroup where permutations fix certain letters, the property of unimodality may become stable.

Yet another way to define stability is to consider its refined version: we say that a  pattern $p$ is {\em $s$-stable} if the number of permutations over a multiset with $s$ occurrences of $p$ is independent of a permutation of letters of the multiset. The case of $s=0$ corresponds to avoidance of $p$ and appears in some properties discussed in this paper. We note that our universal examples in this paper prove that the classical patterns of length more than two are not $1$-stable and certain non-monotone consecutive patterns are not $2$-stable, but they say nothing about the cases for $s=3,4,\ldots$.

\begin{prob}
	Classify the $s$-stability of patterns considered in this paper for $s\geq 0$.
\end{prob}

In Section~\ref{SEC:application}, we utilize the stability of pattern $\underline{12}$ to simplify enumeration parameters and derive recurrence relations for the distribution of $\underline{12}$ on permutations of multisets. However, these relations are insufficient to compute arbitrary $A_{K,\,s}$. This leads to the following problem, which may be approachable by extending the proof of Theorem~\ref{THM:rec-A_mkl} to general multisets.
\begin{prob}
	Find other recurrence relations allowing recursive computation of arbitrary $A_{K,\,s}$.
\end{prob}

\section*{Acknowledgments} We are grateful to Zhicong Lin for his valuable suggestions.

\section*{Declaration of AI Assistance}

During the preparation of this manuscript, the authors utilized an AI-assisted tool for language polishing and grammar checking to improve the readability and clarity of the text. All core reasoning and key conclusions were independently completed by the authors. The use of AI did not involve the substantive generation or creative contribution to the research content. The authors take full academic responsibility for the entire manuscript.

\bibliographystyle{plain}

\end{document}